\theoremstyle{plain}
    \newtheorem{thm}{Theorem}[section]
    \newtheorem{prop}[thm]{Proposition}
    \newtheorem{lemma}[thm]{Lemma}
    \newtheorem{cor}[thm]{Corollary}
\theoremstyle{definition}
    \newtheorem{defn}[thm]{Definition}
\theoremstyle{remark}
    \newtheorem{rem}[thm]{Remark}
    \newtheorem{example}[thm]{Example}
\numberwithin{equation}{section}
\newcommand{\rar}{\ensuremath{\rightarrow}}
\newcommand{\Hom}{\textup{Hom}}
\newcommand{\stmod}{\textup{stmod}}
\newcommand{\uHom}{\underline{\Hom}}
\newcommand{\Ind}{\textup{Ind}}
\newcommand{\Res}{\textup{Res}}
\newcommand{\uInd}{\underline{\Ind}}
\newcommand{\uRes}{\underline{\Res}}
\newcommand{\down}{_{\downarrow P}}
\def\HHH{\operatorname{H}\nolimits}
\def\HHHH{\operatorname{\widehat{H}}\nolimits}
\def\Hom{\operatorname{Hom}\nolimits}
\begin{document}

\title{Ghosts and Strong Ghosts in the stable module category}

\date{\today}

\author[Jon F. Carlson]{Jon F. Carlson}
\address{Department of Mathematics \\
University of Georgia \\
Athens, GA 30602, USA}
\email{jfc@math.uga.edu}

\author{Sunil K. Chebolu}
\address{Department of Mathematics \\
Illinois State University \\
Normal, IL 61790 USA}
\email{schebol@ilstu.edu}

\author{J\'{a}n Min\'{a}\v{c}}
\address{Department of Mathematics\\
University of Western Ontario\\
London, ON N6A 5B7, Canada}
\email{minac@uwo.ca}

\thanks{The first author was partially supported by a grant
  from the  NSF (DMS-1001102) and a grant from NSA 
 (H98230-15-1-0007), the second author by a grant
  from the NSA (H98230-13-1-0238) and the
third  author is supported from NSERC grant (R0370A01)}

\keywords{Tate cohomology, ghost maps, stable
  module category, almost split sequence, periodic cohomology}
\subjclass[2000]{Primary 20C20, 20J06; Secondary 55P42}


\begin{abstract}
  Suppose that $G$ is a finite group and $k$ is a field of characteristic
  $p>0$. A \emph{ghost map} is a map in the stable category of
  finitely generated $kG$-modules which induces the zero map
  in Tate cohomology in all degrees. In an earlier paper we showed that the 
  thick subcategory generated by the trivial module
has no nonzero ghost maps if and only if
  the Sylow $p$-subgroup of $G$ is cyclic of order 2 or 3. 
  In this paper we introduce  and study  variations of ghost maps. 
  In particular,  we consider the behavior of ghost maps under restriction
  and induction functors. We find all groups satisfying a strong form
  of Freyd's generating hypothesis and show that ghosts can
  be detected on a finite range of degrees of Tate cohomology. We also
  consider maps which mimic ghosts in high degrees. 
\end{abstract}

\maketitle
\thispagestyle{empty}

\section{Introduction} \label{intro}
Suppose that $G$ is a finite group and $k$ is a field whose characteristic
divides the order of $G$. A ghost map is a map  between  $kG$-modules 
that induces the zero map in Tate cohomology in all degrees.
There is an extensive literature on ghost maps in the stable
module category  \cite{CCM, CCM2, CCM3, CCM4, CarCheMin, CarCheMin2,
CW, CW2, Aksu-Green}  and in other triangulated categories
\cite{lock, hl1, hl2, hl3}.  Most of this literature
was inspired  by a famous conjecture in homotopy theory due to
Peter Freyd \cite{Freyd} from 1965 which goes under the name of
the generating hypothesis.  This conjecture asserts that there are
no nontrivial ghost maps in the category of finite  spectra.
In the category of spectra a ghost map is a map
which induces the zero map in stable homotopy in all degrees.
Although not much progress has been made on this conjecture,  analogues of ghost maps and the generating hypothesis
have been introduced and studied in other  triangulated categories
in the aforementioned papers.

Motivated by the above work, 
we introduce and study some variations of ghosts maps in
the stable module category of a modular group algebra.
Our analysis includes a complete characterization of the finite groups where a strong version of the generating hypothesis  holds.  The relevant definitions are as follows.  Throughout
the paper we assume that $G$ is a finite group and that $k$ is a field
of characteristic $p$ dividing the order of $G$.

\begin{defn}  \label{defn} Let $M$ and $N$ be finitely
  generated $kG$-modules and let $\varphi \colon M \rar N$
  be a $kG$-homomorphism.  
We say that $\varphi$ is a \emph{ghost}  if it induces the
  zero map in Tate cohomology in all degrees. That is,  for all $i$, the induced map
  \[
  \xymatrix{
 \varphi_* \colon\HHHH^i(G, M) \ar[r] & \HHHH^i(G, N)
}
  \]
is the zero map. 
The map $\varphi$ is a \emph{strong ghost}  if it is a ghost
\and remains a ghost on restriction to all subgroups $H$ of $G$.  That is,  for all $i$ and all subgroups $H$ of $G$, the induced map
  \[
  \xymatrix{
 \varphi_* \colon \HHHH^i(H, M_{\downarrow H}) \ar[r] & 
\HHHH^i(H, N_{\downarrow H})
}
  \]
is the zero map. 

The map $\varphi$ is an \emph{eventual ghost} if it induces the zero map
  in Tate cohomology in all sufficiently large degrees. That is,
  $\varphi$ is an eventual ghost provided there is an integer $n$ such that 
  \[
\xymatrix{
  \varphi_* \colon \ \HHHH^i(G, M) \ar[r] &  \HHHH^i(G, N) 
}
\]
is zero for all integers $i > n$.
\end{defn}

In \cite{CarCheMin2} and \cite{CCM4}  it was shown that every ghost map between $kG$-modules
in the thick subcategory of the stable category generated by the trivial
module is zero if and only if the Sylow $p$-subgroup 
of $G$ is $C_2$ or $C_3$. This settled
Freyd's generating hypothesis for modular group algebras. As for strong
ghosts, there is only one more case in which all strong ghosts vanish. 
A main result of this paper proves that
every strong ghost is zero if and only if the Sylow $p$-subgroup
of $G$ is cyclic of order 2, 3 or 4.  The result can be viewed as saying that
a strong form of Freyd's generating hypothesis holds only for the
groups mentioned. The theorem was used in the work \cite{CW2}, which
cited an early version of this manuscript. The proof of the theorem
on strong ghosts is constructive, using
Auslander and Reiten's theory of almost split sequences together with
standard induction and restriction methods. These results are proved in
Sections 2 and 3.

In Section 4, we demonstrate that the property of being a ghost
for a map $\varphi: M \to N$ is detected in a finite range of cohomology
degrees, which depend on $M$ and $N$.  An important step in the development
is a proof that the dual of any ghost is again a ghost. This also
applies to strong ghosts.

In the final section of this paper we study eventual ghosts.
It is clear that every ghost map is also an eventual ghost, so
the converse seems to be a natural question. The answer is that this
happens if and only if $G$ has periodic cohomology. 
This question is related to the finite generation of Tate cohomology
studied in \cite{CarCheMin}.
\vskip.1in
\noindent
\textbf{Acknowledgements:}
We would like to thank  Mark Hovey,  Jonathan Pakianathan and 
Gaohong Wang for helpful conversations and questions. In particular,
we thank Dan Christensen and Wang for pointing out misprints and an 
error in an earlier manuscript.  We also wish to thank the referee for many 
suggestions which improved the exposition of the paper.

\section{Preliminaries: ghosts under restriction, induction and duality}

Throughout the paper we let $G$ be a finite group and let $k$ be
a field of characteristic $p$. Recall that $kG$ is a self-injective
algebra, meaning that projective modules are injective and {\it vice
  versa}. The modules that we consider are all finitely generated.
If $M$ is a $kG$-module and $\varphi: P \to M$ is a projective cover,
then the kernel of $\varphi$ is denoted $\Omega(M)$. Dually, if
$\theta: M \to Q$ is the injective hull of $M$, then the cokernel
of $\theta$ is denoted $\Omega^{-1}(M)$. Inductively, we write
$\Omega^n(M) = \Omega(\Omega^{n-1}(M))$ and $\Omega^{-n}(M)
= \Omega^{-1}(\Omega^{1-n}(M))$. 

Most of the  objects of this study reside in
the stable module category $\stmod(kG)$. It is the
category whose objects are finitely generated
left $kG$-modules. The set of morphisms between $kG$-modules $M$ and
$N$ in $\stmod(kG)$ is denoted $\uHom_{kG}(M,N)$. It is the
quotient of the $k$-vector space of $kG$-module homomorphisms 
by the subspace of those maps
that factor through a projective module.  Thus, projective modules
are zero in this category. A stably trivial map is a map between
$kG$-modules which factors through a projective.  The stable module
category is a triangulated category in which the triangles come from
short exact sequences of $kG$-modules. The translation functor
is $\Omega^{-1}$. The stable module category is the
natural home for Tate cohomology. A fact that we use often is that,
for $M$  a $kG$-module, the Tate cohomology group
$\HHHH^i(G, M)$ is isomorphic to $\uHom_{kG}(\Omega^i(k) , M)$.
For more details on group cohomology and the stable category,
see \cite{carlson-modulesandgroupalgebras, CTVZ}.

Let $H$ be a subgroup of a group $G$. There are restriction and
induction functors  between the corresponding stable categories:
\[
\uRes_{G,H} \colon \stmod(kG) \rar \stmod(kH)
\] 
which remembers only the action of $H$ on a module $M$ and 
\[
\uInd_H^G \colon \stmod(kH) \rar \stmod(kG)
\]
which takes a $kH$-module $M$ to $M^{\uparrow G} =
kG \otimes_{kH} M$. We denote
the restriction of $M$ to $H$ by $M_{\downarrow H}$ or just $M_H$.
The Eckmann-Shapiro Lemma \cite{carlson-modulesandgroupalgebras}  
says that these two functors are adjoint
to each other.  In particular, for $M$ a $kH$-module, 
\[
\HHHH^i(H, M) \cong \HHHH^i(G, M^{\uparrow G})
\]
for all $i$.  

We now develop some tools using the induction and restriction functors 
are used in the proof of Theorem \ref{thm:strongghost}.

\begin{lemma} \label{faithful}
Let $P$ be a Sylow $p$-subgroup of $G$. The restriction functor 
$\uRes_{G,P}$ is faithful. That is, if  $\varphi \colon M \rar N$  is
a map of $kG$-modules such that $\uRes_{G,P}(\varphi): M_{\downarrow P} 
\to N_{\downarrow P}$ is
zero in $\stmod(kP)$, then $\varphi$ is zero in $\stmod(kG)$.
\end{lemma}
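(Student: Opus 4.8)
The plan is to exploit the classical transfer (Gysin/corestriction) argument for cohomology of the pair $(G,P)$, which in this setting becomes a statement about the stable category. The key point is that for a Sylow $p$-subgroup $P$ of $G$, the composite of restriction $\uRes_{G,P}$ followed by induction $\uInd_P^G$ is, up to stable isomorphism, multiplication by a unit on any $kG$-module $M$. More precisely, the index $[G:P]$ is invertible in $k$ (since it is prime to $p$), and the unit of the adjunction $M \to \uInd_P^G \uRes_{G,P}(M)$ together with the counit gives a map $M \to M$ whose effect is multiplication by $[G:P]$; this map is therefore an isomorphism in $\stmod(kG)$. Equivalently, $M$ is a direct summand, in $\stmod(kG)$, of the induced module $(M_{\downarrow P})^{\uparrow G}$.

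Granting this, the argument is short. First I would recall the precise form of the splitting: the composite
\[
\xymatrix{
M \ar[r]^-{\eta_M} & (M_{\downarrow P})^{\uparrow G} \ar[r]^-{\varepsilon_M} & M
}
\]
of unit and counit of the $(\uInd_P^G, \uRes_{G,P})$-adjunction equals multiplication by $[G:P]$, which is invertible in $k$; hence $M$ is a retract of $(M_{\downarrow P})^{\uparrow G}$ in $\stmod(kG)$, and likewise for $N$. Next, given $\varphi \colon M \to N$ with $\uRes_{G,P}(\varphi)$ stably trivial, I would apply the induction functor: since $\uInd_P^G$ is an additive functor on $\stmod(kP)$, the map $\uInd_P^G(\uRes_{G,P}(\varphi)) \colon (M_{\downarrow P})^{\uparrow G} \to (N_{\downarrow P})^{\uparrow G}$ is zero in $\stmod(kG)$. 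Then I would use naturality of the unit/counit to obtain the commuting square relating $\varphi$ (scaled by $[G:P]$) to this induced map, concluding that $[G:P]\cdot\varphi = 0$ in $\stmod(kG)$; since $[G:P]$ is a unit in $k$, $\varphi = 0$.

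Finally I would package the two displayed naturality squares. The square
\[
\xymatrix{
M \ar[r]^-{\eta_M} \ar[d]_{\varphi} & (M_{\downarrow P})^{\uparrow G} \ar[d]^{\uInd(\uRes(\varphi))} \ar[r]^-{\varepsilon_M} & M \ar[d]^{\varphi} \\
N \ar[r]^-{\eta_N} & (N_{\downarrow P})^{\uparrow G} \ar[r]^-{\varepsilon_N} & N
}
\]
commutes by naturality of $\eta$ and $\varepsilon$; reading the horizontal composites as multiplication by $[G:P]$ on each side gives $[G:P]\varphi = \varepsilon_N \circ \uInd(\uRes(\varphi)) \circ \eta_M = 0$, as the middle vertical map vanishes in $\stmod(kG)$.

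The main obstacle, such as it is, is simply making sure the adjunction-plus-transfer yields the splitting at the level of the \emph{stable} category rather than merely on cohomology; this is standard (it is the usual argument that $kG$ is relatively $kP$-projective when $[G:P]$ is invertible), but one must be careful that the scalar $[G:P]$ really is a unit in $k$ — which holds precisely because $p \nmid [G:P]$ for $P$ a Sylow $p$-subgroup — and that the functor $\uInd_P^G$ on $\stmod(kP)$ sends stably trivial maps to stably trivial maps, which follows from it being additive and sending projectives to projectives.
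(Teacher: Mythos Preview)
Your argument is correct and is essentially the same transfer argument as the paper's: both hinge on the fact that the natural composite $M \to (M_{\downarrow P})^{\uparrow G} \to M$ is multiplication by the unit $[G:P] \in k$. The paper carries this out by taking adjoints of an explicit factorization of $\varphi_{\downarrow P}$ through a projective $kP$-module, while you phrase it via the naturality square for the unit and counit; the underlying computation is the same (one minor quibble: your $\eta_M$ and $\varepsilon_M$ actually come from the two \emph{different} Frobenius adjunctions between $\Ind$ and $\Res$, not from a single one, but this is harmless).
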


\begin{proof}
  Suppose that $\Res_{G,P}(\varphi)$ factors through a
  projective $kP$-module $T$:
\[ 
\xymatrix{
M\down \ar[r]^\beta & T \ar[r]^\gamma & N\down
}
\]
Consider the diagram of $kG$-modules
\[ 
\xymatrix{
M \ar[r]^{\widehat{\beta}} & T^{\uparrow G} \ar[r]^{\widehat{\gamma}} & N,
}
\]
where $\widehat{\beta}$ and $\widehat{\gamma}$ are the adjoints of the maps
$\beta$ and $\gamma$. Let $n$ be the index of $P$ in $G$. It is
easy to verify that $\widehat{\gamma} \widehat{\beta} = n \varphi$.
Since $P$ is a Sylow $p$-subgroup, the integer $n$ is
coprime to $p$, and  therefore it is invertible in $k$.
Replacing $\widehat{\beta}$ by $(1/n)\widehat{\beta}$, we get a factorization
of $\varphi$ through $T^{\uparrow G}$, a projective $kG$-module. This
means $\varphi$ is zero in $\stmod(kG)$.
\end{proof}

\begin{cor}
A map $\varphi: M \to N$ is a ghost whenever its restriction to a Sylow $p$-subgroup
of $G$ is a ghost. 
\end{cor}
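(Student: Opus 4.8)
The plan is to reduce the statement to Lemma~\ref{faithful} by observing that ``being a ghost'' is itself a statement about a certain map vanishing in a stable category. Suppose $\varphi \colon M \to N$ restricts to a ghost $\varphi_{\downarrow P} \colon M_{\downarrow P} \to N_{\downarrow P}$ on a Sylow $p$-subgroup $P$. Fix an integer $i$ and an element $\zeta \in \HHHH^i(G, M)$. Recall from the preliminaries that $\HHHH^i(G, M) \cong \uHom_{kG}(\Omega^i(k), M)$, so we may represent $\zeta$ by a $kG$-map $\hat\zeta \colon \Omega^i(k) \to M$, and $\varphi_*(\zeta)$ is represented by $\varphi \hat\zeta$. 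First I would check that $\Omega^i$ commutes with restriction up to a projective summand, so that $(\Omega^i_{kG}(k))_{\downarrow P} \cong \Omega^i_{kP}(k)$ in $\stmod(kP)$; this is standard, since restriction of a projective $kG$-resolution of $k$ is a projective $kP$-resolution of $k$. Hence $\res_{G,P}(\zeta) \in \HHHH^i(P, M_{\downarrow P})$ is represented by $\hat\zeta_{\downarrow P} \colon \Omega^i_{kP}(k) \to M_{\downarrow P}$.

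Now apply the hypothesis: $\varphi_{\downarrow P}$ is a ghost, so $(\varphi_{\downarrow P})_* (\res_{G,P}(\zeta)) = 0$ in $\HHHH^i(P, N_{\downarrow P})$. Under the identification above this says exactly that the composite $\varphi_{\downarrow P}\, \hat\zeta_{\downarrow P} = (\varphi \hat\zeta)_{\downarrow P} \colon \Omega^i_{kP}(k) \to N_{\downarrow P}$ is zero in $\stmod(kP)$, i.e. the restriction to $P$ of the $kG$-map $\varphi \hat\zeta$ is stably trivial. By Lemma~\ref{faithful}, the restriction functor $\uRes_{G,P}$ is faithful, so $\varphi \hat\zeta$ is zero in $\stmod(kG)$; that is, $\varphi_*(\zeta) = 0$ in $\HHHH^i(G, M)$. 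Since $\zeta$ and $i$ were arbitrary, $\varphi_*$ is the zero map in every degree, so $\varphi$ is a ghost.

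The only genuinely delicate point is the compatibility of the $\Omega$-operator with restriction and the resulting commutativity of the diagram $\HHHH^i(G,M) \to \HHHH^i(P, M_{\downarrow P})$ with $\varphi_*$; everything else is a formal application of Lemma~\ref{faithful}. In fact this bookkeeping can be avoided entirely: restriction $\uRes_{G,P}$ is a triangulated (exact) functor between stable categories, and the restriction map on Tate cohomology is, up to the natural isomorphism $\HHHH^i(G,-) \cong \uHom_{kG}(\Omega^i k, -)$, simply postcomposition with $\uRes_{G,P}$ applied to a representing map. So one can phrase the argument purely at the level of the stable categories: $\varphi$ being a ghost is equivalent to $\uHom(\Omega^i k, \varphi) = 0$ for all $i$, and since $\uRes_{G,P}$ carries $\Omega^i_{kG}(k)$ to $\Omega^i_{kP}(k)$ and is faithful, $\varphi$ is a ghost as soon as its restriction is.
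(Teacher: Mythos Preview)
Your proof is correct and follows essentially the same route as the paper: represent a Tate class by a map $\Omega^i(k)\to M$, compose with $\varphi$, restrict to $P$, use that $\varphi_{\downarrow P}$ is a ghost to see the restricted composite is stably trivial, and invoke faithfulness of $\uRes_{G,P}$ from Lemma~\ref{faithful}. The paper's version is terser and leaves the compatibility of $\Omega^i$ with restriction implicit, but the argument is the same.
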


\begin{proof}
Let $\varphi: M \to N$ be a map such that its restriction to a Sylow $p$-subgroup of $G$ is a ghost. To show that $\varphi$ is ghost, we have to show that the composition 
\[ 
\xymatrix{
\Omega^i(k) \ar[r]^f & M \ar[r]^\varphi & N 
}
\]
is zero in the stable module category for all integers $i$ and all $f$. Since restriction to a Sylow $p$-subgroup $P$ was shown to be faithful, it is enough to show that restriction of this composition to $P$ is zero. But that latter is true because $\varphi$ restricted to $P$ is a ghost by assumption.
\end{proof}

\begin{prop} \label{strong-restrict}
  A map $\varphi: M \to N$ is a strong ghost if and only if it is a
  ghost on restriction to every $p$-subgroup of $G$.
\end{prop}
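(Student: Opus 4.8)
The plan is to prove the two implications separately, using the definition of strong ghost (a ghost that remains a ghost on restriction to \emph{all} subgroups $H$) together with Sylow theory and the Eckmann--Shapiro adjunction. The forward direction is essentially trivial: if $\varphi$ is a strong ghost, then by definition its restriction to every subgroup $H$ is a ghost, and in particular its restriction to every $p$-subgroup is a ghost. So the content is entirely in the reverse direction.

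For the reverse direction, suppose $\varphi \colon M \rar N$ is a ghost after restriction to every $p$-subgroup of $G$. I must show $\varphi$ remains a ghost on restriction to an arbitrary subgroup $H \le G$. Fix such an $H$ and let $Q$ be a Sylow $p$-subgroup of $H$; then $Q$ is a $p$-subgroup of $G$, so by hypothesis $\uRes_{G,Q}(\varphi)$ is a ghost as a map of $kQ$-modules. Now apply the Corollary following Lemma~\ref{faithful}, but \emph{relative to the group $H$}: that corollary says a map of $kH$-modules is a ghost whenever its restriction to a Sylow $p$-subgroup of $H$ is a ghost. Since $\uRes_{G,Q}(\varphi) = \uRes_{H,Q}\bigl(\uRes_{G,H}(\varphi)\bigr)$ and this is a ghost over $kQ$, we conclude that $\uRes_{G,H}(\varphi)$ is a ghost over $kH$. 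As $H$ was arbitrary, $\varphi$ is a strong ghost. (One should also note the trivial base case: taking $H = G$ and $Q$ its Sylow $p$-subgroup shows $\varphi$ itself is a ghost, so the condition ``strong ghost = ghost + ghost on all subgroups'' is fully met.)

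The one point that deserves care — and the only real obstacle — is justifying that the Corollary following Lemma~\ref{faithful} applies with $H$ in place of $G$. That corollary as stated is about $G$, but its proof only uses that $\stmod(kG)$ is triangulated with $\Omega^i(k)$ generating the relevant Hom-functors and that restriction to a Sylow $p$-subgroup is faithful (Lemma~\ref{faithful}); both facts hold verbatim with $G$ replaced by any finite group, in particular by $H$. So I would either invoke the Corollary ``applied to $H$'' with a one-line remark that its proof is group-independent, or inline the short argument: for any $i$ and any $g \colon \Omega^i(k_H) \rar \uRes_{G,H}(M)$, the composite $\varphi \circ g$ restricts to zero over $Q$ because $\uRes_{G,Q}(\varphi)$ kills all of $\HHHH^*(Q, -)$, hence is zero over $H$ by faithfulness of $\uRes_{H,Q}$. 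Either way the proof is short; the substance is simply recognizing that ``ghost'' is a local condition at $p$ and that Sylow subgroups of subgroups are $p$-subgroups of the ambient group.
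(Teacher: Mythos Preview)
Your proof is correct and follows essentially the same route as the paper: the forward direction is immediate, and for the converse you pick a Sylow $p$-subgroup $Q$ of an arbitrary subgroup $H$, factor $\uRes_{G,Q}(\varphi) = \uRes_{H,Q}(\uRes_{G,H}(\varphi))$, and invoke the Corollary to Lemma~\ref{faithful} applied to $H$ to conclude that $\uRes_{G,H}(\varphi)$ is a ghost. Your remark that the Corollary must be read with $H$ in place of $G$ is exactly the point the paper glosses over when it says ``by the last lemma''.
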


\begin{proof}
  The ``only if'' part is obvious from the definition. So suppose that
  $\uRes_{G,Q}(\varphi)$ is a ghost for every $p$-subgroup $Q$ of $G$.
  Suppose that $H$ is any subgroup of $G$ and that $Q$ is a Sylow
  $p$-subgroup of $H$. Since $\uRes_{G,Q}(\varphi) =
    \uRes_{H,Q}(\uRes_{G,H}(\varphi))$ is a ghost, 
    $\uRes_{G,H}(\varphi)$ is a ghost by the last lemma.
    So $\varphi$ is a strong ghost.
\end{proof}

\begin{prop} \label{rest-strong}
  Suppose that $H$ is a subgroup of $G$ that contains a Sylow $p$-subgroup
of $G$.
  Let $\varphi: M \to N$ be a $kG$-homomorphism. Then $\varphi$ is a strong
  ghost if and only if $\uRes_{G,H}(\varphi)$ is a strong ghost.
\end{prop}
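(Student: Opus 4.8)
The plan is to route everything through Proposition~\ref{strong-restrict}, which says that a $kG$-map is a strong ghost precisely when its restriction to every $p$-subgroup of $G$ is a ghost. The ``only if'' direction uses nothing about $H$ and I would dispose of it in one line: if $\varphi$ is a strong ghost and $K$ is any subgroup of $H$, then $\uRes_{H,K}(\uRes_{G,H}(\varphi)) = \uRes_{G,K}(\varphi)$ is a ghost by the definition of strong ghost, so $\uRes_{G,H}(\varphi)$ is a strong ghost.

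For the ``if'' direction, assume $\uRes_{G,H}(\varphi)$ is a strong ghost. By Proposition~\ref{strong-restrict} it suffices to show that $\uRes_{G,Q}(\varphi)$ is a ghost for every $p$-subgroup $Q$ of $G$, so fix such a $Q$. Since $H$ contains a Sylow $p$-subgroup $P$ of $G$, and every $p$-subgroup of $G$ lies in some Sylow $p$-subgroup, all of which are $G$-conjugate to $P$, there is an element $g \in G$ with $Q' := gQg^{-1} \subseteq P \subseteq H$. Then $Q'$ is a $p$-subgroup of $H$, so Proposition~\ref{strong-restrict} applied to $H$ shows that $\uRes_{G,Q'}(\varphi) = \uRes_{H,Q'}(\uRes_{G,H}(\varphi))$ is a ghost. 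It remains to transfer this property from $Q'$ back to $Q$: multiplication by $g$ gives an isomorphism $M_{\downarrow Q} \rar M_{\downarrow Q'}$ that is semilinear over the conjugation isomorphism $Q \rar Q'$, $x \mapsto gxg^{-1}$, and likewise for $N$; since $\varphi$ is $kG$-linear it commutes with these maps, and on Tate cohomology they induce isomorphisms $\HHHH^i(Q', M_{\downarrow Q'}) \cong \HHHH^i(Q, M_{\downarrow Q})$ for all $i$ that are compatible with $\varphi_*$. Hence $\uRes_{G,Q}(\varphi)$ is a ghost if and only if $\uRes_{G,Q'}(\varphi)$ is, so $\uRes_{G,Q}(\varphi)$ is a ghost; as $Q$ was arbitrary, $\varphi$ is a strong ghost.

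The only point I expect to need any care is the conjugation step at the end — verifying that conjugation by $g \in G$ carries the ghost condition on restriction to $Q'$ to the ghost condition on restriction to $Q$. This is the standard fact that conjugation by an element of the ambient group $G$ is ``inner'' from the viewpoint of $kG$-modules, so it acts compatibly (indeed trivially, up to the coherent identifications) on the relevant Tate cohomology groups; no genuine obstacle arises, and the proposition is in essence a bookkeeping consequence of Lemma~\ref{faithful} and Proposition~\ref{strong-restrict} together with Sylow theory.
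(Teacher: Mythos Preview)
Your proof is correct and follows essentially the same route as the paper: both directions are handled via Proposition~\ref{strong-restrict}, and the key point in the ``if'' direction is that any $p$-subgroup of $G$ is conjugate to a subgroup of $H$. The paper's proof is terser, simply asserting that $\uRes_{G,Q}(\varphi)$ is a ghost ``by the last proposition'' without spelling out the conjugation step; you have made that step explicit, which is arguably an improvement in clarity rather than a difference in approach.
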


\begin{proof}
  The ``only if'' part is obvious from the definition. Suppose that
  $\uRes_{G,H}(\varphi)$ is a strong ghost. Any $p$-subgroup $Q$ of $G$
  is conjugate to a subgroup of $H$ and hence $\uRes_{G,Q}(\varphi)$
  is a ghost by the last proposition. Therefore, again by the last proposition,
  $\varphi$ is a strong ghost. 
\end{proof}
 
For the induction functor we get an even stronger result. We 
also need this in the proof of Theorem \ref{thm:strongghost}.

\begin{prop} \label{strong-induct}
  Suppose that $H$ is a subgroup of $G$ and that $\varphi: M \to N$ is a
  homomorphism of $kH$-modules. Then $\varphi$ is a strong ghost if and only
  if $\uInd_H^G(\varphi)$ is a strong ghost. 
\end{prop}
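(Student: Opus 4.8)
The plan is to reduce both implications, via Proposition \ref{strong-restrict}, to statements about restriction to $p$-subgroups, and then to feed the Mackey decomposition formula into a few elementary facts about ghosts.

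The facts I would record first are: (a) if $J\le K$ and $\psi$ is a ghost of $kJ$-modules, then $\uInd_J^K(\psi)$ is a ghost of $kK$-modules --- this is immediate from the \emph{naturality} in $A$ of the Eckmann--Shapiro isomorphism $\HHHH^i(K,A^{\uparrow K})\cong\HHHH^i(J,A)$, which forces $\uInd_J^K(\psi)$ to be zero on $\HHHH^i(K,-)$ as soon as $\psi$ is zero on $\HHHH^i(J,-)$; (b) conjugation by $g\in G$ is an isomorphism of stable categories $\stmod(kJ)\to\stmod(k\,{}^gJ)$ compatible with Tate cohomology, so a conjugate of a ghost is a ghost; and (c) a finite direct sum $\bigoplus_\alpha\psi_\alpha$ of maps of $kK$-modules is a ghost if and only if each $\psi_\alpha$ is, since Tate cohomology commutes with finite direct sums. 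I would also note the formal trivialities that restricting a strong ghost to a subgroup again gives a strong ghost, and that any direct summand of a strong ghost is a strong ghost.

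For the ``only if'' direction I would take an arbitrary $p$-subgroup $R$ of $G$ and apply the Mackey formula: $\uRes_{G,R}\uInd_H^G$ applied to a $kH$-module $M$ is the direct sum, over representatives $g$ of the double cosets $R\backslash G/H$, of the modules $\uInd_{R\cap{}^gH}^R\big({}^g(\uRes_{H,\,g^{-1}Rg\cap H}M)\big)$, and this decomposition is natural in $M$, so it carries $\uInd_H^G(\varphi)$ to the corresponding direct sum of maps. Because $\varphi$ is a strong ghost and $g^{-1}Rg\cap H$ is a subgroup of $H$, each $\uRes_{H,\,g^{-1}Rg\cap H}(\varphi)$ is a ghost; applying (b), then (a), then (c) shows $\uRes_{G,R}\uInd_H^G(\varphi)$ is a ghost. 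Since $R$ was an arbitrary $p$-subgroup of $G$, Proposition \ref{strong-restrict} yields that $\uInd_H^G(\varphi)$ is a strong ghost.

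For the converse I would use only the $g=e$ term of the same formula, which exhibits $M$ as a natural direct summand of $\uRes_{G,H}\uInd_H^G(M)$, hence $\varphi$ as a direct summand of $\uRes_{G,H}\uInd_H^G(\varphi)$. If $\uInd_H^G(\varphi)$ is a strong ghost then so is its restriction to $H$, and therefore so is the summand $\varphi$. The only real obstacle I anticipate is bookkeeping --- stating the Mackey formula and the conjugation conventions precisely, and confirming that induction carries ghosts to ghosts --- but the latter follows at once from the naturality of Eckmann--Shapiro, so I expect no genuine difficulty.
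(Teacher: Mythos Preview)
Your argument is correct and follows essentially the same route as the paper: Mackey decomposition at a $p$-subgroup, naturality of the Eckmann--Shapiro isomorphism, and Proposition~\ref{strong-restrict}. For the converse you extract $\varphi$ as the $g=e$ summand of $\uRes_{G,H}\uInd_H^G(\varphi)$ and use that summands of strong ghosts are strong ghosts, whereas the paper instead restricts to $p$-subgroups $Q\le H$ and reads off $\uRes_{H,Q}(\varphi)$ as the $x=1$ summand there; these are cosmetic variants of the same idea.
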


\begin{proof}
  Let $Q$ be a $p$-subgroup of $G$. Then by the Mackey decomposition theorem,
  \[
  (M^{\uparrow G})_{\downarrow Q}  \cong \bigoplus_{x \in Q\backslash G / H}
  ((x \otimes M)_{\downarrow Q \cap xHx^{-1}})^{\uparrow Q}
  \]
  where the sum is over a set of representatives of the $Q$-$H$ double
  cosets. Note that for $x \in G$ and $m \in M$, the map
  $\uInd_H^G(\varphi)$ on $M^{\uparrow G}$ is given by $\varphi(x\otimes m)
  = x \otimes \varphi(m)$. The point of this observation is that
  $\uInd_H^G(\varphi)$ commutes with the Mackey decomposition. Hence
  $\uRes_{G,Q}(\uInd_H^G(\varphi))$ is a direct sum of maps
  \[
\xymatrix{
  \varphi_x : \quad ((x \otimes M)_{\downarrow Q \cap xHx^{-1}})^{\uparrow Q} 
\ar[r] &  ((x \otimes N)_{\downarrow Q \cap xHx^{-1}})^{\uparrow Q}
}
  \]
  where, again, the sum is taken over a set of representative of the
  $Q$-$H$-double cosets. It follows that
  $\uRes_{G,Q}(\uInd_H^G(\varphi))$ is a ghost
  if and only if every $\varphi_x$ is a ghost.

  Suppose that $\uInd_H^G(\varphi)$ is a strong ghost. If $Q$ is a
  $p$-subgroup of $H$, then $\uRes_{G,Q}(\uInd_H^G(\varphi))$ is a ghost
  and $\varphi_x$ is a ghost for every $x$. In the case that $x=1$,
  we have that $\varphi_1 = \uRes_{H,Q}(\varphi)$ which
  is a ghost.  Proposition  \ref{rest-strong} implies that  
$\varphi$ is a strong ghost.

On the other hand, if $\varphi$ is a strong ghost, then 
for any $p$-subgroup $Q$ of $G$, we have that
\[
\xymatrix{
\varphi_x = \uRes_{xHx^{-1},Q \cap xHx^{-1}}(x \otimes \varphi) : 
(x \otimes M)_{\downarrow Q \cap xHx^{-1}} \ar[r] & 
(x \otimes N)_{\downarrow Q \cap xHx^{-1}}
}
\]
is a ghost. Then the naturality of the Eckmann-Shapiro isomorphism
\[
\HHHH^*(Q, ((x \otimes M)_{\downarrow Q \cap xHx^{-1}})^{\uparrow Q}) \cong
\HHHH^*(Q \cap xHx^{-1},  (x \otimes M)_{\downarrow Q \cap xHx^{-1}})
\]
  asserts that each $\varphi_x$ is a ghost and hence
  $\uInd_H^G(\varphi)$ is a strong ghost by Proposition  \ref{rest-strong}.
\end{proof}

The last two propositions give us the following corollary which is 
useful in Theorem \ \ref{thm:strongghost}.

\begin{cor}
Let $P$ be a Sylow $p$-subgroup of a group $G$. The strong generating hypothesis holds for $\stmod(kG)$ if and only if it holds for $\stmod(kP)$.
\end{cor}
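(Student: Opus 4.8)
The plan is to prove the two implications separately, each as a short reduction to one of the structural results already established; recall that the strong generating hypothesis for a stable module category $\stmod(kG)$ asserts that every strong ghost in it is stably trivial.

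\textbf{From $G$ to $P$.} Suppose the strong generating hypothesis holds for $\stmod(kG)$ and let $\varphi\colon M\to N$ be a strong ghost in $\stmod(kP)$. By Proposition \ref{strong-induct} the induced map $\uInd_P^G(\varphi)$ is a strong ghost in $\stmod(kG)$, hence it is stably trivial by hypothesis. Restricting a factorization of $\uInd_P^G(\varphi)$ through a projective $kG$-module back to $P$ --- the restriction of a projective module to a subgroup is again projective --- shows that $\uRes_{G,P}(\uInd_P^G(\varphi))$ is stably trivial in $\stmod(kP)$. Now apply the Mackey decomposition exactly as in the proof of Proposition \ref{strong-induct}, taking the restricting $p$-subgroup to be $P$ itself: the map $\uRes_{G,P}(\uInd_P^G(\varphi))$ is a block-diagonal direct sum of maps $\varphi_x$ indexed by the double cosets $P\backslash G/P$, and the term at the trivial double coset is $\varphi_1=\varphi$. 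Since a direct summand of a stably trivial map is stably trivial, $\varphi=0$ in $\stmod(kP)$.

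\textbf{From $P$ to $G$.} Suppose the strong generating hypothesis holds for $\stmod(kP)$ and let $\varphi\colon M\to N$ be a strong ghost in $\stmod(kG)$. The subgroup $P$ contains a Sylow $p$-subgroup of $G$, namely itself, so Proposition \ref{rest-strong} shows that $\uRes_{G,P}(\varphi)$ is a strong ghost in $\stmod(kP)$, and hence $\uRes_{G,P}(\varphi)=0$ by hypothesis. Lemma \ref{faithful}, the faithfulness of restriction to a Sylow $p$-subgroup, then forces $\varphi=0$ in $\stmod(kG)$.

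I do not anticipate a real obstacle here: all the required machinery --- Lemma \ref{faithful} and Propositions \ref{rest-strong} and \ref{strong-induct} --- is already in place, and the corollary is little more than bookkeeping on top of it. The one point worth a moment's care is the summand step in the first implication: one should confirm that the trivial double coset of $P\backslash G/P$ contributes precisely the map $\varphi$, up to isomorphism of source and target, and that stable triviality is inherited by block-diagonal summands. Both facts are immediate.
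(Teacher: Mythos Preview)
Your proof is correct and follows essentially the approach the paper intends: the paper leaves the proof implicit, saying only that the corollary follows from Propositions~\ref{rest-strong} and~\ref{strong-induct} (together with Lemma~\ref{faithful}), and your argument is a faithful unpacking of that claim. The only point worth flagging is that your $G\Rightarrow P$ direction relies not just on the statement of Proposition~\ref{strong-induct} but on the Mackey-decomposition structure inside its proof to recover $\varphi$ as a summand; this is entirely legitimate, and indeed necessary, since the proposition as stated only transfers the strong-ghost property, not stable triviality.
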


\section{Groups with no strong ghosts}  \label{sec:nostrong}
In this section we consider groups whose stable module categories have no
strong ghosts.  Recall, from Theorem 1.1 of
\cite{CarCheMin2}, that the thick subcategory of $\stmod(kG)$ 
generated by the trivial module has no nontrivial
ghosts if and only if the Sylow $p$-subgroup of $G$ is $C_2$ or $C_3$. 
If the Sylow $p$-subgroup of $G$ is either $C_2$
with $p=2$ or $C_3$ with $p=3$,
then every ghost is a strong ghost, and hence there are no nontrivial strong
ghosts in $\stmod(kG)$. So we consider $C_4$.

\begin{prop} \label{cyclic4}
Suppose that $k$ is a field of characteristic $2$. Then
$\stmod(kC_4)$ has no nontrivial strong ghosts. 
\end{prop}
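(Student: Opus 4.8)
The plan is to work entirely inside $\stmod(kC_4)$ with $k$ of characteristic $2$, since this is a very small and explicitly understood category. The algebra $kC_4 = k[x]/(x^4)$ (writing $x = g-1$ for a generator $g$) is a uniserial local algebra, so its indecomposable modules are exactly the modules $J_i = k[x]/(x^i)$ for $i = 1,2,3,4$, with $J_4 = kC_4$ the unique projective-injective. Thus in $\stmod(kC_4)$ there are exactly three nonzero indecomposables, $J_1 = k$, $J_2$, and $J_3$, and every module is a direct sum of copies of these together with a projective summand. A map $\varphi\colon M\to N$ is a ghost if and only if it induces zero on $\widehat{H}^i(C_4, M)\to\widehat{H}^i(C_4,N)$ for all $i$; since $C_4$ has periodic cohomology of period $2$, with $\Omega(k) = J_3$ and $\Omega^{-1}(k) = \Omega(k) = J_3$ as well (more precisely $\Omega(J_1) \cong J_3$, $\Omega(J_3) \cong J_1$, and $\Omega(J_2)\cong J_2$), the ghost condition on $\varphi$ reduces to the vanishing of the induced maps $\uHom(k, M)\to\uHom(k,N)$ and $\uHom(J_3, M)\to\uHom(J_3,N)$.

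First I would reduce to the case of maps between indecomposables: since ghosts, and the $\uHom$-functors $\uHom(k,-)$ and $\uHom(J_3,-)$, are additive, a map is a (strong) ghost iff each of its components between indecomposable summands is. So it suffices to show that every ghost $\varphi\colon J_a\to J_b$ with $a,b\in\{1,2,3\}$ is stably trivial. Here a crucial point is that for a $p$-group like $C_4$ the only subgroup to check for the strong-ghost condition beyond $G$ itself is the subgroup $C_2$ (and the trivial subgroup, which is vacuous), by Proposition~\ref{strong-restrict}; in fact I expect the argument will show that the plain ghost condition over $C_4$ already forces $\varphi$ to be stably trivial, so the ``strong'' hypothesis is not even needed here — but if some ghost $J_a\to J_b$ fails to be stably trivial, I would then restrict to $C_2$, where $kC_2 = k[y]/(y^2)$ and the only nonzero indecomposable in $\stmod(kC_2)$ is $k$ itself, and use that $\uHom_{kC_2}(k,-)$ detects everything to kill it. Concretely, one computes the (small, finite-dimensional) spaces $\uHom_{kC_4}(J_a,J_b)$ and the maps $\uHom(k,-)$, $\uHom(J_3,-)$ applied to a basis, and checks directly that a map in the kernel of both must factor through $J_4$.

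The main obstacle — really the only place any genuine computation happens — is verifying the indecomposable-to-indecomposable cases, in particular understanding $\uHom_{kC_4}(J_2, J_2)$ and the endomorphism behavior at the middle module $J_2$, together with the maps $J_1\leftrightarrow J_3$ and their interaction with the syzygy identification $\Omega J_1 = J_3$. One should identify which multiples of $x$ in $\End_{kC_4}(J_2) = k[x]/(x^2)$ act as zero on $\uHom(k, J_2) = \widehat{H}^0(C_4,J_2)$ versus $\widehat{H}^{-1}(C_4, J_2)\cong\uHom(J_3,J_2)$; here the self-duality of $J_2$ (it is its own $k$-dual) and the fact that $\widehat{H}^i(C_4,J_2)$ is one-dimensional in each degree make the bookkeeping manageable. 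Having checked the handful of cases, additivity and the reductions above yield that $\stmod(kC_4)$ has no nonzero strong ghosts — indeed no nonzero ghosts in this range — which proves the proposition. I would also remark for later use (Theorem~\ref{thm:strongghost}) that going beyond $C_4$, e.g.\ to $C_8$ or $C_2\times C_2$, the same small-module analysis will instead exhibit explicit nonzero strong ghosts, so $C_4$ is the last case.
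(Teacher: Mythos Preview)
Your overall plan matches the paper's: reduce to maps between the three indecomposables, dispose of any case with source or target $J_1=k$ or $J_3=\Omega(k)$ (the paper does this in one line via the dual-of-a-ghost result, Proposition~\ref{prop:dualghost}, rather than by case-by-case $\uHom$ computations), and then treat $J_2\to J_2$ by restricting to $H\cong C_2$.

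However, your expectation that ``the plain ghost condition over $C_4$ already forces $\varphi$ to be stably trivial'' and your concluding parenthetical ``indeed no nonzero ghosts in this range'' are wrong. Multiplication by $x$ on $J_2$ is a nonzero ghost in $\stmod(kC_4)$: one checks that no nonzero endomorphism of $J_2$ factors through a projective, so $\uEnd_{kC_4}(J_2)\cong k[x]/(x^2)$ and $x$ is stably nonzero; meanwhile $\uHom(k,J_2)$ and $\uHom(J_3,J_2)$ are each one-dimensional, generated by $1\mapsto\bar x$ and by the quotient $1\mapsto 1$ respectively, and post-composing either generator with multiplication by $x$ yields a stably trivial map. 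Thus the strong hypothesis is genuinely needed for $J_2\to J_2$, and your ``backup'' is not a backup but the actual argument. That argument is exactly what the paper does: since $J_2\cong k_H^{\uparrow G}$, one has $(J_2)_{\downarrow H}\cong k_H\oplus k_H$, and the restriction of multiplication by $x$ is a nonzero endomorphism of a trivial $kH$-module, hence nonzero on $\HHHH^0(H,-)$ and therefore not a ghost over $H$.
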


\begin{proof} 
Let $G \cong C_4$ a cyclic group of order 4. The group 
algebra has exactly three isomorphism classes of indecomposable
modules represented by modules $M_i$ of dimension $i$ for
$i = 1, 2, 3$. Observe that $M_1 \cong k$, $M_2 \cong k_H^{\uparrow G}$ and
$M_3 = \Omega(k)$. Here $H$ is the subgroup of $G$ of order 2 and $k_H$ denotes the trivial $kH$-module.
All three of these modules are self-dual.
Clearly, no nonzero (in the stable category) map from $M_i$ to $M_j$
can be ghost if either $i$ or $j$ is 1 or 3. This follows from
the definition and Proposition \ref{prop:dualghost}. Consequently,
any possible nonzero strong ghost maps $M_2$ to itself. 
However, the restriction
of $M_2$ to $H$ is $(M_2)_{\downarrow H} \cong k_H \oplus k_H$. 
Because any strong
ghost from $M_2$ to itself induces the zero map on 
$\HHHH^0(H, (M_2)_{\downarrow H})$,
it is actually the zero map.
\end{proof}

Now notice in the cases examined thus far,  whenever $G$ is a $p$-group and 
$\stmod(kG)$ has no strong ghosts,  the only
indecomposable modules are either syzygies of the 
 trivial module
or induced modules from proper subgroups. This, in fact, is the whole story.

\begin{prop} \label{almostsplitlemma} Let $G$ be a finite group. 
If there exists an indecomposable nonprojective $kG$-module $M$ such that 
\begin{enumerate}
\item for every nontrivial $p$-subgroup $Q$ of $G$, $Q$ not a Sylow
  $p$-subgroup, the module $M$ is not a
  direct summand of a module induced from $Q$,  and
\item $M \ncong \Omega^i(k)$ for any $i$,
\end{enumerate}
then there exists a nontrivial strong ghost in $\stmod(kG)$.
\end{prop}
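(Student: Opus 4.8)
The plan is to construct the strong ghost directly from the almost split sequence terminating in $M$. Let
\[
\xymatrix{
0 \ar[r] & \Omega^2(M) \oplus (\text{proj}) \ar[r] & E \ar[r]^-{\pi} & M \ar[r] & 0
}
\]
be the almost split (Auslander--Reiten) sequence ending at $M$; recall that the left-hand term is $\tau(M)$, which in $\stmod(kG)$ is $\Omega^2(M)$. The defining property is that $\pi$ is a minimal right almost split map: any map $W \to M$ that is not a split epimorphism factors through $\pi$. I claim the composite
\[
\xymatrix{
\varphi : \ \Omega^{-1}(\Omega^2(M) \oplus P) \ar[r] & M \ar[r]^-{\iota} & \ \text{(the mapping cone shift)}
}
\]
— more precisely, the connecting map $\delta \colon M \to \Omega^{-1}(\tau M)$ associated to this triangle — is a nontrivial ghost, and conditions (1) and (2) upgrade it to a strong ghost. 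So the target of the ghost will be $N = \Omega^{-1}(\Omega^2(M)) = \Omega(M)$, up to projectives, and $\varphi \colon M \to \Omega(M)$ is the connecting homomorphism.

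First I would verify that $\delta$ is a ghost. A map $f \colon \Omega^i(k) \to M$ is either a split epimorphism (impossible unless $M \cong \Omega^i(k)$, excluded by (2)) or factors through $\pi$; since $\delta \pi = 0$ in the triangle, $\delta f = 0$. Hence $\delta_* = 0$ on $\HHHH^i(G,M) \cong \uHom_{kG}(\Omega^i(k), M)$ for every $i$, so $\delta$ is a ghost. Second, $\delta$ is nonzero in $\stmod(kG)$: if it were zero the triangle would split, forcing $M$ to be a summand of $E$ in a way incompatible with the almost split sequence being non-split (equivalently, $M$ would be projective). Third — the main point — I would show $\delta$ remains a ghost after restriction to every $p$-subgroup $Q$. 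By Proposition \ref{strong-restrict} it suffices to check $p$-subgroups. Restricting the almost split sequence to $Q$ and decomposing, the key input is Green's theory: the restriction $\uRes_{G,Q}$ of an almost split sequence is a direct sum of split sequences and almost split sequences for $kQ$, and — this is where hypothesis (1) enters — because $M$ is not a summand of anything induced from a proper non-Sylow $p$-subgroup, the $kQ$-summand of $M$ appearing "at the end" of the relevant $kQ$-almost split sequence is again not a syzygy of $k$ over $Q$ in the problematic way, so the same split-epi-or-factors-through argument applies over $Q$. A map $\Omega^i(k_Q) \to M_{\downarrow Q}$ that is a split epimorphism would exhibit a syzygy of $k$ as a summand of $M_{\downarrow Q}$; tracking this back up through induction (Eckmann--Shapiro) and using (1) and (2) rules it out.

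The hard part will be the third step: making precise how the almost split sequence for $M$ over $kG$ restricts to $kQ$ and identifying exactly which indecomposable $kQ$-summands of $M_{\downarrow Q}$ can receive a split epimorphism from a syzygy of the trivial module. This requires the theorem (Kawata, Green) on the behavior of almost split sequences under restriction to subgroups containing a vertex, together with a careful vertex/source analysis: condition (1) is precisely engineered to say that the vertex of $M$ is the Sylow $p$-subgroup $P$ (or at least not a proper non-Sylow $p$-subgroup), which controls the double-coset / Mackey decomposition of $M_{\downarrow Q}$ enough to push the ghost property down. I would isolate this as a lemma about restrictions of almost split sequences before assembling the proof, and I expect conditions (1) and (2) to be used exactly once each: (2) to get the ghost property over $G$ and nontriviality, (1) to propagate it to all $p$-subgroups.
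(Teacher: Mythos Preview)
Your construction and the first two steps are exactly the paper's: take the almost split sequence ending in $M$, let $\varphi\colon M\to\Omega(M)$ be the connecting map, and use condition~(2) together with the right almost split property to see that every $f\colon\Omega^i(k)\to M$ factors through $X$, so $\varphi$ is a nontrivial ghost over $G$.

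The gap is in your third step. You propose to analyze $\uRes_{G,Q}$ of the almost split sequence via the Green/Kawata structure theory (restriction as a sum of split and almost split sequences over $kQ$), and then argue that each almost split summand is again a ghost over $Q$. Two problems: first, those restriction theorems need hypotheses relating $Q$ to the vertex of $M$ that you have not established from~(1); second, even granting such a decomposition, you would need to know that no indecomposable summand of $M_{\downarrow Q}$ is isomorphic to some $\Omega^i(k_Q)$, and there is no clear route from~(1) and~(2) to that statement. Your sketch of ``tracking this back up through induction (Eckmann--Shapiro)'' does not pin down why a syzygy summand of $M_{\downarrow Q}$ would contradict either hypothesis.

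The paper avoids all of this by proving something stronger and simpler: for each non-Sylow $p$-subgroup $Q$, the restricted sequence \emph{splits outright}, so $\uRes_{G,Q}(\varphi)=0$. The argument is a one-line Frobenius reciprocity trick. By~(1), no map $(M_{\downarrow Q})^{\uparrow G}\to M$ can be a split epimorphism, hence every such map factors through $X\to M$ by the almost split property over $G$. In particular the counit does, so
\[
\uHom_{kG}\bigl((M_{\downarrow Q})^{\uparrow G},X\bigr)\longrightarrow \uHom_{kG}\bigl((M_{\downarrow Q})^{\uparrow G},M\bigr)
\]
is surjective; Eckmann--Shapiro converts this into surjectivity of $\uHom_{kQ}(M_{\downarrow Q},X_{\downarrow Q})\to\uHom_{kQ}(M_{\downarrow Q},M_{\downarrow Q})$, so $\mathrm{id}_{M_{\downarrow Q}}$ lifts and the restricted sequence splits. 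No vertex theory, no analysis of $kQ$-summands, is needed. You should replace your third step with this argument.
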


\begin{proof}
Consider the almost split sequence which ends in $M$ \cite{Aus-Car}:
\[
\xymatrix{
0 \ar[r] & \Omega^2(M) \ar[r] & X \ar[r] & M \ar[r] & 0
}
\]
This sequence is represented by a map $\varphi \colon M \rar \Omega(M)$ in the stable module category. We claim that $\varphi$ is a strong ghost in $\stmod(kG)$.  From the second condition on $M$, we know that $\varphi$
is a nontrivial ghost in $\stmod(kG)$.  See \cite{CarCheMin2}.

The first condition on $M$ implies that
the above sequence splits on restriction
to any $p$-subgroup $Q$. Let $\theta: (M_{\downarrow Q})^{\uparrow G} \to M$ be a homomorphism. This map cannot be a split epimorphism by the first condition.  By the
definition of an almost split sequence, then there is a map $\mu$ such
that the diagram
\[
\xymatrix{
&&& (M_{\downarrow Q})^{\uparrow G} \ar[d]^\theta \ar[dl]_\mu \\
0 \ar[r] &  \Omega^2(M) \ar[r] & X   \ar[r] & M  \ar[r]  & 0
}
\]
commutes. Hence, the map $\uHom_{kG}((M_{\downarrow Q})^{\uparrow G}, X) 
\to \uHom_{kG}((M_{\downarrow Q})^{\uparrow G}, M)$
is surjective. However, then the Eckmann-Shapiro Lemma tells us that 
$\uHom_{kQ}(M_{\downarrow Q}, X_{\downarrow Q}) 
\to \uHom_{kQ}(M_{\downarrow Q}, M_{\downarrow Q})$ is surjective and the
almost split sequence splits on restriction to $Q$.
This means that $\Res_{G,Q}(\varphi) = 0$, and by Proposition
 \ref{strong-restrict}, $\varphi$ is a strong ghost. 
\end{proof}

We are now prepared to prove the main theorem of this section.

\begin{thm} \label{thm:strongghost}
  Let $G$ be a finite group and $k$ be a field of characteristic $p$.
  In the stable module category $\stmod(kG)$
  every strong ghost is zero if and only if the Sylow $p$-subgroup
  of $G$ is $C_2$, $C_3$ or $C_4$.
\end{thm}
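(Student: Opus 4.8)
The plan is to reduce immediately to the case of a $p$-group using the corollary to Propositions \ref{rest-strong} and \ref{strong-induct}: the strong generating hypothesis holds for $\stmod(kG)$ if and only if it holds for $\stmod(kP)$, where $P$ is a Sylow $p$-subgroup. So I would assume $G$ is a $p$-group from the outset. The ``if'' direction is then almost done: if $P \cong C_2$ or $C_3$ then by Theorem 1.1 of \cite{CarCheMin2} there are no nonzero ghosts at all (the thick subcategory generated by $k$ is all of $\stmod(kP)$ for $P$ a $p$-group), hence a fortiori no nonzero strong ghosts; and the case $P \cong C_4$ is exactly Proposition \ref{cyclic4}. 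So the entire content of the theorem is the ``only if'' direction: if $G$ is a finite $p$-group whose Sylow $p$-subgroup (i.e.\ $G$ itself) is \emph{not} one of $C_2, C_3, C_4$, then $\stmod(kG)$ admits a nonzero strong ghost.

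For the ``only if'' direction the strategy is to produce, for every such $G$, an indecomposable nonprojective $kG$-module $M$ satisfying the two hypotheses of Proposition \ref{almostsplitlemma}, namely (1) $M$ is not a direct summand of any module induced from a nontrivial proper $p$-subgroup $Q$ of $G$, and (2) $M \ncong \Omega^i(k)$ for any $i$; the almost split sequence ending in $M$ then furnishes the desired strong ghost. I would split into cases according to the structure of $G$. If $G$ is not cyclic, then $G$ has at least two distinct maximal subgroups, and more to the point the number of isomorphism classes of indecomposable $kG$-modules is large (indeed infinite when $G$ is not of finite representation type, and even in the finite-representation-type cases $C_{p^n}$, $p$ odd, $Q_8$, etc.\ there are plenty); the key point is a counting/dimension argument: modules induced up from a \emph{proper} subgroup $Q$ have dimension divisible by $[G:Q] \geq p$, and they account for only finitely many (or a controlled family of) indecomposable summands, while the syzygies $\Omega^i(k)$ form a single $\Omega$-orbit. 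So one just needs one indecomposable $M$ outside the union of these two families. The remaining case is $G$ cyclic of order $p^n$ with $p^n \notin \{2,3,4\}$, i.e.\ $G \cong C_{p^n}$ with either $p = 2, n \geq 3$ or $p$ odd, $n \geq 1$ (noting $C_3$ is excluded and $C_5, C_7, \dots, C_9, \dots$ are included). Here $kC_{p^n}$ has exactly $p^n$ indecomposables, of dimensions $1, 2, \dots, p^n$, and $\Omega(M_j) \cong M_{p^n - j}$, so the $\Omega$-orbit of $k = M_1$ is $\{M_1, M_{p^n-1}\}$; induced modules from the unique subgroup $C_{p^{n-1}}$ have even-index summands, and one checks directly that, e.g., $M_3$ (when $p^n \geq 5$, so $M_3$ exists and is distinct from $M_1, M_{p^n-1}$) is not a summand of a module induced from any proper subgroup — a Mackey-type dimension argument again, since $[C_{p^n}:C_{p^{n-1}}] = p$ divides the dimension of anything induced from $C_{p^{n-1}}$, and $\dim M_3 = 3$ is not divisible by $p$ unless $p = 3$, in which case one uses $M_2$ or $M_4$ instead and argues via the module structure (the socle series) that $M_2$ is not a summand of $M_1 \!\uparrow$, which has a different Loewy structure. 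Either way one exhibits a valid $M$, applies Proposition \ref{almostsplitlemma}, and concludes.

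The main obstacle I anticipate is the non-cyclic, infinite-representation-type case handled uniformly: one must be sure that for \emph{every} such $p$-group there is an indecomposable avoiding both obstruction families, and the cleanest way is probably not a blanket counting argument but a structural one — restrict attention to a subgroup of $G$ isomorphic to $C_p \times C_p$ (every non-cyclic $p$-group contains one) or to $C_{p^2}$ or $C_8$ or $Q_8$ as appropriate, produce $M$ there, and push it up via $\uInd$, using Proposition \ref{strong-induct} to know strong ghosts are preserved under induction; but then one has to make sure the induced module itself (or a summand) still satisfies hypothesis (1) of Proposition \ref{almostsplitlemma} with respect to \emph{all} proper $p$-subgroups of the big group, which is where the Mackey decomposition bookkeeping gets delicate. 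A safer route, and the one I would actually write, is: first reduce via the corollary to $G = P$ a $p$-group; then observe it suffices to handle the \emph{minimal} non-$\{C_2,C_3,C_4\}$ $p$-groups, i.e.\ $C_p$ for $p \geq 5$, $C_8$, $C_9$ wait — no, minimality is w.r.t.\ nothing useful here since subgroups of a $p$-group are $p$-groups but the strong generating hypothesis does not obviously pass to subgroups. So in fact no further reduction is available and one must treat each $G$, organizing by rank: rank $1$ (cyclic) via the explicit indecomposable list above, and rank $\geq 2$ via the containment of $C_p \times C_p$ plus a careful induction argument. The delicate verification of hypothesis (1) in the rank $\geq 2$ case — ruling out that the chosen indecomposable is induced from a proper $p$-subgroup — is the real heart of the matter and the step I expect to occupy most of the proof.
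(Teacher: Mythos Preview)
Your framework matches the paper's: reduce to $G$ a $p$-group via the corollary, dispatch the ``if'' direction with Proposition \ref{cyclic4} and \cite{CarCheMin2}, and for ``only if'' exhibit an $M$ satisfying the hypotheses of Proposition \ref{almostsplitlemma}. Your cyclic case is essentially the paper's (it takes $n=2$ for $p$ odd and $n=3$ for $p=2$).

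For the non-cyclic case there is a genuine gap. Your claim that every non-cyclic $p$-group contains $C_p \times C_p$ is false: generalized quaternion $2$-groups do not, and for $Q_8$ in particular the only proper nontrivial subgroups are $C_2$ and $C_4$, neither of which admits a nonzero strong ghost, so no induction-from-a-subgroup strategy can ever succeed there. (Incidentally, when a subgroup $H$ \emph{does} carry a nonzero strong ghost $\varphi$, you do not need to re-verify hypothesis (1) of Proposition \ref{almostsplitlemma} for any induced module: Proposition \ref{strong-induct} already says $\uInd_H^G(\varphi)$ is a strong ghost, and it is nonzero because induction is faithful on the stable category---restrict back to $H$ and read off the identity double coset in the Mackey decomposition. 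So the ``delicate Mackey bookkeeping'' you anticipate is not the issue; the $Q_8$ obstruction is.)

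The paper sidesteps all of this with a direct, uniform construction that exploits precisely the dimension observation you already made. For $G$ non-cyclic of order at least $5$, take a composition series $0 \subset A_1 \subset \cdots \subset A_{|G|} = kG$ of the regular module and set $M = kG/A_{p+1}$. Then $M^G = k$, so $M$ is indecomposable; $\dim M = |G| - (p+1) \equiv -1 \pmod p$, so $M$ is not a summand of anything induced from a proper subgroup; and since syzygies of $k$ have dimension $\equiv \pm 1 \pmod{|G|}$, $M$ could be one only if $|G|$ divides $p$ or $p+2$, both excluded by hypothesis. The sole remaining non-cyclic case is the Klein four group, handled separately by choosing an indecomposable of dimension $2n$ with $n>2$. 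No induction from subgroups, no rank casework, and $Q_8$ is covered automatically.
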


\begin{proof}
  By Lemma \ref{faithful}, Propositions \ref{rest-strong} and \ref{strong-induct}, we may assume that $G$ is a $p$-group.
  The ``if'' part is a consequence of Theorem 1.1 of \cite{CarCheMin2}
  and Proposition \ref{cyclic4}.  By Proposition \ref{almostsplitlemma},
  it remains only to show that if $G$ is a $p$-group that is not cyclic
  of order 2, 3, or 4, then $G$ has an indecomposable nonprojective module $M$ that is
  not a syzygy of the trivial module and not a direct summand of a module
  induced from a proper nontrivial subgroup of $G$.  A $p$-group that 
is not cyclic of order $2, 3$ or $4$ belongs to exactly one of the 
following 3 disjoint cases. In each of these cases, we  
show that there exists a module $M$ with the above-mentioned 
properties.  We use the fact that direct summands of modules
  induced from proper subgroups have dimension divisible by $p$.

Assume that $G$ is cyclic of order at least $5$.  In this case we let  $M$ be 
any indecomposable module of dimension $n$, where $n$ is not 1 or 
$\vert G\vert-1$ and not divisible by $p$.  More specifically, we 
take $n = 2$ when $p$ is odd and $n= 3$ when $p =2$. Since $G$ is 
cyclic and has order at least 5, the  unique indecomposable module 
of this dimension has the desired properties. 
  
Next, assume that  $G$ is not cyclic and has order at least $5$. 
Consider a composition series
\[ 0 
\subseteq A_1 (= k) \subseteq A_2 \subseteq \cdots \subseteq A_s (= kG) 
\]
of submodules of $kG$ such that each successive quotient is isomorphic to 
$k$.  Such a series exits because $G$ being a $p$-group has only one 
simple module equal to $k$.    Observe that 
since $G$ is not cyclic $s > p+1$. Let $N = A_{p+1}$ and
let $M = kG/N$. Note that $M^G = k$ and hence $M$ is indecomposable, 
and moreover $M$ has dimension not a multiple of $p$.  We claim that $M$  
is  not a syzygy of the trivial module. To see this first note that 
a syzygy of the trivial module has dimension $\pm{1}$ modulo $|G|$, 
whereas the module $M$ has dimension 
$-(p+1)$ modulo $|G|$. So if $M$ is a syzygy of $k$, then 
either       $ -p - 1 -1$ or $-p$  is multiple of $|G|$.
The first possibility implies that $|G|$ divides 4 which contradicts 
the assumption that $|G| \ge 5$.  The second possibility cannot occur 
because $G$ is not cyclic and hence it does not have order $p$. So we are done.
  
Finally, assume that $G$ is not cyclic and has order at most $4$. 
This means $G$ is the Klein four group $V_4$. We can take $M$ to be 
any indecomposable module of dimension $2n$ for $n>2$ (see, for 
example \cite{HR}).  The syzygies of the trivial $kV_4$-module 
are all odd dimensional, so $M$ is not one of them.  
In this case, proper subgroups are cyclic and
there are only 4 isomorphism classes of module induced from proper subgroups,
and they all have dimension 2 or 4 and our module has 
dimension $2n$ where $n > 2$. 
\end{proof}

\begin{rem}
  The reader might note that in \cite{CarCheMin2},
  the main theorem characterizers
the groups in which all ghosts between modules in the thick subcategory 
of $\stmod(kG)$ generated by $k$ are zero. This restriction to this 
subcategory is not necessary
for strong ghosts, because as noted in Proposition \ref{strong-restrict},
the property of being a strong ghost is detected by restrictions to 
$p$-subgroups, and for a $p$-group, the stable category is generated by 
the trivial module. 
\end{rem}

\section{Cohomology in a bounded range determines ghosts} \label{boundedrange}

Our first proposition shows that in order to verify that a map is a ghost 
it is enough to check the induced map in cohomology in finitely many degrees.

\begin{prop} \label{prop:boundghost}
Let $M$ and $N$ be two finitely generated $kG$-modules.
 There exists a nonnegative integer $d$  such that  
 if $\varphi \colon M \rar N$ is a $kG$-homomorphism with the property that
\[
\xymatrix{
  \HHHH^i(G, \varphi) \colon \HHHH^i(G, M) \ar[r] & \HHHH^i(G, N)
}
\]
is zero for all $-d \le i \le d$, then $\varphi$ is a ghost.
\end{prop}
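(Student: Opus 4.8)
The plan is to reduce the statement to a finiteness property of the graded Tate cohomology of $M$ and $N$ as modules over the Tate cohomology ring $\HHHH^*(G,k)$, and to exploit the identification $\HHHH^i(G,M)\cong \uHom_{kG}(\Omega^i(k),M)$. The key point is that a map $\varphi\colon M\to N$ is a ghost exactly when the induced map $\varphi_*\colon \HHHH^*(G,M)\to\HHHH^*(G,N)$ of graded $\HHHH^*(G,k)$-modules is zero in every degree. Since $\varphi_*$ is a homomorphism of graded modules over $R:=\HHHH^*(G,k)$, if $\HHHH^*(G,M)$ is generated as an $R$-module by its homogeneous elements in degrees lying in some finite window, say $[-d_0,d_0]$, then $\varphi_*$ vanishes in all degrees as soon as it vanishes on that finite set of generators, i.e.\ as soon as $\HHHH^i(G,\varphi)=0$ for $-d_0\le i\le d_0$. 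So the whole proposition comes down to producing, for any finitely generated $kG$-modules $M$ and $N$, an integer $d$ (depending on $M$ and $N$) such that $\HHHH^*(G,M)$ is generated over $R$ in degrees $\ge -d$ and such that the positive-degree generators also lie below $d$.

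First I would recall the relevant finite-generation input. By the Evens–Venkov theorem, $H^*(G,k)$ is a Noetherian ring and, for $M$ finitely generated, $H^*(G,M)$ is a Noetherian $H^*(G,k)$-module; in particular it is generated in finitely many degrees, so there is $d_+$ with $H^i(G,M)$ (and $H^i(G,N)$) contributing no new generators for $i>d_+$. The subtlety is that Tate cohomology $\HHHH^*$ is unbounded in the negative direction and need not be Noetherian over $\HHHH^*(G,k)$ as a whole. To handle the negative degrees, I would pass to duality: by the standard duality $\HHHH^{-i-1}(G,M)\cong \HHHH^i(G,M^*)^{\vee}$ (Tate duality for the self-injective algebra $kG$), and since the proposition's duality statement promised in the introduction (the dual of a ghost is a ghost) will be available, it suffices to control generation of $\HHHH^*(G,M^*)$ and $\HHHH^*(G,N^*)$ in high positive degrees, which again follows from Evens–Venkov applied to $M^*$ and $N^*$. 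Concretely: choose $d_1$ so that both $H^*(G,M)$ and $H^*(G,N)$ are generated over $H^*(G,k)$ in degrees $\le d_1$, and choose $d_2$ so that $H^*(G,M^*)$ and $H^*(G,N^*)$ are generated in degrees $\le d_2$; then set $d=\max(d_1,d_2)$, or a small explicit shift thereof to account for the degree shift in the duality isomorphism.

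The argument then proceeds as follows. Suppose $\HHHH^i(G,\varphi)=0$ for all $-d\le i\le d$. For $i\ge 0$ with $i>d$: write any class in $\HHHH^i(G,M)=H^i(G,M)$ as an $H^*(G,k)$-combination of generators in degrees $\le d_1\le d$; since $\varphi_*$ commutes with the $H^*(G,k)$-action and kills those generators, $\varphi_*$ kills the class. For $i<0$ with $i<-d$: apply Tate duality to rewrite $\HHHH^i(G,\varphi)$ (up to a degree shift and taking $k$-duals) in terms of $\HHHH^{-i-1}(G,\varphi^*)$ where $\varphi^*\colon N^*\to M^*$, note $-i-1\ge d\ge d_2$, and run the same generator argument with $M^*,N^*$ in place of $M,N$; the duality from Section 4 guarantees that vanishing of $\HHHH^j(G,\varphi)$ for $j$ in the positive window is equivalent to vanishing of $\HHHH^{-j-1}(G,\varphi^*)$, which one can arrange by symmetry of the chosen window. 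Hence $\varphi_*=0$ in all degrees, i.e.\ $\varphi$ is a ghost.

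The main obstacle is the negative-degree half. Ordinary cohomology's Noetherianity handles positive degrees immediately, but in negative degrees $\HHHH^*(G,M)$ is not finitely generated over $\HHHH^*(G,k)$, so one genuinely needs the duality bridge to reflect the problem back into a positive-degree finite-generation statement; making the bookkeeping of the degree shift in Tate duality match up cleanly with the symmetric window $[-d,d]$ is the one place that requires care. I expect the cleanest route is to first establish (as the introduction indicates Section 4 does) that $\varphi$ is a ghost iff $\varphi^*$ is a ghost, prove the positive-degree bound $H^i(G,\varphi)=0$ for all $i\ge 0$ from vanishing in $0\le i\le d_1$ via Evens–Venkov, and then obtain the negative half for free by applying that positive-degree result to $\varphi^*$ and invoking the ghost/dual-ghost equivalence together with Tate duality to translate back.
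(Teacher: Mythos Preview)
Your argument is essentially correct (modulo the degree-shift bookkeeping you flag), but it is a much heavier approach than the one the paper uses for this proposition. The paper's proof is a one-line finite-dimensionality argument: since $M$ and $N$ are finitely generated, $\uHom_{kG}(M,N)$ is a finite-dimensional $k$-vector space; the subspaces $S_i=\{\varphi:\HHHH^j(G,\varphi)=0 \text{ for } |j|\le i\}$ form a descending chain in this space, hence stabilize at some $S_d$, and $\bigcap_i S_i$ is by definition the space of ghosts. No Evens--Venkov, no Tate duality, no module-theoretic finite generation of $\HHHH^*(G,M)$ is needed.

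What you have written is, in effect, a proof of the paper's later and sharper Theorem~\ref{thm:ghostpositivecoho}, which gives an \emph{explicit} bound for $d$ in terms of the degrees of generators of $H^*(G,M)$ and $H^*(G,N^*)$. That is the genuine payoff of your route: it is constructive, whereas the descending-chain argument gives no information about the size of $d$. The cost is that you must invoke Evens--Venkov and set up the Tate duality bridge (which in the paper is packaged as Proposition~\ref{prop:dualghost} and Corollary~\ref{cor:ghostbycohomology}) before you can conclude. For the bare existence statement of Proposition~\ref{prop:boundghost}, the paper's elementary argument is both shorter and logically prior to the duality results of Section~\ref{boundedrange}.
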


\begin{proof}
Let $\mathcal{G}(M, N)$ denote the set of all ghost maps from $M$ to $N$. 
Let
\[
S_i \ = \ \{\varphi: M \to N \ \vert \ 
\HHHH^j(G,\varphi) =0 \quad \text{for all $j$ such that} 
\quad -i \leq j \leq i \}.  
\]
Consider the descending sequence of subspaces
\[
S_0 \ \supseteq S_1 \ \supseteq \  S_{2} \ \supseteq \ \cdots
\]
This sequence stabilizes because all the subspaces live in a
finite dimensional vector space. So there exists an integer $d$
such that $S_d = S_{d+1} = \dots = \mathcal{G}(M,N)$. 
This equation is equivalent to the assertion in the the proposition.
\end{proof} 

Note that the integer $d$ in the statement of this proposition
depends only on $M$ and $N$ and not on
the map $\varphi$ between them. 


We now prove a duality result which is used to show that ghosts can 
be detected in bounded non-negative degrees.  
Let $M^* = \Hom_k(M,k)$ be the $k$-dual of a $kG$-module $M$. If
$\varphi: M \to N$ is a $kG$-homomorphism, then the naturality of the
functor $\Hom_k$ yields a map $\varphi^*: N^* \to M^*$.

\begin{prop} \label{prop:dualghost}
  The dual of a ghost is a ghost, and the dual of a strong ghost is a
  strong ghost.
\end{prop}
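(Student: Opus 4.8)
## Proof proposal

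The plan is to use the well-known duality relating Tate cohomology of a module with coefficients in $M$ to that with coefficients in $M^*$. Specifically, for any finitely generated $kG$-module $M$ there is a natural isomorphism $\HHHH^i(G, M^*) \cong \HHHH^{-i-1}(G, M)^*$, coming from the fact that $k$-linear duality is an exact contravariant functor on $\modcat(kG)$ that sends projectives to projectives, hence induces a duality on $\stmod(kG)$ that interchanges $\Omega$ and $\Omega^{-1}$; combined with $\HHHH^i(G,M) \cong \uHom_{kG}(\Omega^i k, M)$ and the identification $\uHom_{kG}(A,B) \cong \uHom_{kG}(B^*, A^*)$, one gets the stated formula. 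The key point is \emph{naturality} in $M$: a homomorphism $\varphi \colon M \to N$ induces a commuting square relating $\varphi_* \colon \HHHH^i(G,M) \to \HHHH^i(G,N)$ and $(\varphi^*)_* \colon \HHHH^{-i-1}(G,N^*) \to \HHHH^{-i-1}(G,M^*)$, where the vertical maps are the duality isomorphisms (up to the contravariant flip, $(\varphi^*)_*$ becomes the $k$-dual of $\varphi_*$).

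First I would record the duality isomorphism and its naturality precisely, citing \cite{carlson-modulesandgroupalgebras} or \cite{CTVZ} for the underlying facts about $k$-duality on $\stmod(kG)$. Then the ghost statement is immediate: if $\varphi$ is a ghost, then $\varphi_* = 0$ in every degree $i$, so its $k$-dual $(\varphi_*)^* = 0$ in every degree, and by naturality this is precisely $(\varphi^*)_*$ in degree $-i-1$; letting $i$ range over all integers shows $\varphi^* \colon N^* \to M^*$ kills Tate cohomology in all degrees, i.e.\ $\varphi^*$ is a ghost.

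For the strong ghost part, I would combine this with the behavior of duality under restriction. Restriction commutes with $k$-duality: $(M_{\downarrow H})^* \cong (M^*)_{\downarrow H}$ naturally as $kH$-modules, and $\uRes_{G,H}(\varphi^*) = (\uRes_{G,H}(\varphi))^*$ under this identification. So if $\varphi$ is a strong ghost, then for every subgroup $H \le G$ the restriction $\uRes_{G,H}(\varphi)$ is a ghost in $\stmod(kH)$, hence by the first part its dual $(\uRes_{G,H}(\varphi))^* = \uRes_{G,H}(\varphi^*)$ is a ghost in $\stmod(kH)$; since this holds for all $H$, the map $\varphi^*$ is a strong ghost. (Alternatively one can invoke Proposition \ref{strong-restrict} and only check $p$-subgroups, but the argument is the same.)

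The only real subtlety — and the step I would be most careful about — is pinning down the naturality square for the duality isomorphism $\HHHH^i(G, M^*) \cong \HHHH^{-i-1}(G, M)^*$ with the correct variance, so that ``$\varphi$ is a ghost'' genuinely translates into ``$\varphi^*$ is a ghost'' with the degree shift $i \mapsto -i-1$ accounted for. Everything else is formal manipulation with adjunctions and exact functors; there are no delicate estimates or case analyses.
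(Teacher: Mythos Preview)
Your proposal is correct and follows essentially the same route as the paper: both use the natural Tate duality isomorphism $\HHHH^{-i-1}(G, M) \cong (\HHHH^i(G, M^*))^*$ to transport the vanishing of $\varphi_*$ to the vanishing of $(\varphi^*)_*$, and both handle the strong ghost case by noting that $k$-duality (and the Tate duality isomorphism) commutes with restriction to subgroups. Your write-up is slightly more explicit about where the duality isomorphism comes from, but the argument is the same.
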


\begin{proof}
Suppose that $\varphi: M \to N$ is a ghost.
Recall that Tate duality gives a natural isomorphism
\[\HHHH^{-i -1} (G, L) \cong (\HHHH^i (G, L^*))^*\]
for any finite-dimensional module $L$. Thus, for each $i$, we have the
following commutative diagram, where the vertical maps are induced by $\varphi$:
\[
\xymatrix{
 \HHHH^{-i -1}(G, M) \ar[r]^{\cong} \ar[d] & (\HHHH^i(G, M^*))^* \ar[d] \\
  \HHHH^{-i -1}(G, N) \ar[r]^{\cong} & (\HHHH^i(G, N^*))^*.
}
\]
 Since the two horizontal maps are isomorphisms, the right vertical map is
 zero because the left vertical map is zero. Consequently, $\varphi^*$ is
 also a ghost. The statement about strong ghost follows from the
 fact that the dual operation and Tate duality commute with restriction
 to a subgroup. 
\end{proof}

The next result is a corollary of the above proof. The point is that the 
second condition in the corollary is equivalent (by the previous diagram)
to the statement that $\HHHH^i(G,\varphi)$ is zero for all $i \leq 0$. 

\begin{cor} \label{cor:ghostbycohomology}
 A map $\varphi \colon M \rar N$ between finitely generated
$kG$-modules is a ghost if and only if the following two conditions hold.
\begin{enumerate}
\item $\HHHH^i(G, \varphi) \colon \HHHH^i(G, M) 
\rar \HHHH^i(G, N)$ is zero for all $i \ge 0$.
\item $\HHHH^i(G, \varphi^*) \colon \HHHH^i(G, N^*) 
\rar \HHHH^i(G, M^*)$ is zero for all $i \ge 0$.
\end{enumerate}
\end{cor}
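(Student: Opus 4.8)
The plan is to derive Corollary~\ref{cor:ghostbycohomology} directly from Proposition~\ref{prop:dualghost} and the Tate duality diagram appearing in its proof. The key observation is that Tate duality, namely the natural isomorphism $\HHHH^{-i-1}(G,L)\cong(\HHHH^i(G,L^*))^*$, translates vanishing of $\HHHH^j(G,\varphi)$ in \emph{negative} degrees into vanishing of $\HHHH^i(G,\varphi^*)$ in \emph{non-negative} degrees, and vice versa. Thus conditions (1) and (2) together exactly encode the vanishing of $\HHHH^j(G,\varphi)$ in \emph{all} degrees, which is the definition of a ghost.

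In more detail, first I would observe that the ``only if'' direction is immediate: if $\varphi$ is a ghost then $\HHHH^i(G,\varphi)=0$ for all $i$, giving (1), and by Proposition~\ref{prop:dualghost} the dual $\varphi^*$ is also a ghost, giving (2). For the ``if'' direction, assume (1) and (2). Condition (1) handles all degrees $i\ge 0$. For the remaining degrees, fix $i\ge 0$ and consider the commutative square from the proof of Proposition~\ref{prop:dualghost}:
\[
\xymatrix{
 \HHHH^{-i -1}(G, M) \ar[r]^{\cong} \ar[d] & (\HHHH^i(G, M^*))^* \ar[d] \\
  \HHHH^{-i -1}(G, N) \ar[r]^{\cong} & (\HHHH^i(G, N^*))^*.
}
\]
Here the right-hand vertical map is the $k$-linear dual of $\HHHH^i(G,\varphi^*)\colon \HHHH^i(G,N^*)\to\HHHH^i(G,M^*)$, which is zero by condition (2); hence its dual is zero, and since the horizontal arrows are isomorphisms, the left-hand vertical map $\HHHH^{-i-1}(G,\varphi)$ is zero. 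As $i$ ranges over $\{0,1,2,\dots\}$, the index $-i-1$ ranges over $\{-1,-2,-3,\dots\}$, so $\HHHH^j(G,\varphi)=0$ for all $j<0$. Combined with (1), this gives $\HHHH^j(G,\varphi)=0$ for all $j\in\bZ$, so $\varphi$ is a ghost.

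There is essentially no obstacle here, since all the machinery has already been assembled in the proof of Proposition~\ref{prop:dualghost}; the only point requiring a little care is the bookkeeping of indices — making sure that the duality isomorphism is applied to $N$ (not $M$) so that the arrow appearing vertically on the right is genuinely the dual of $\HHHH^i(G,\varphi^*)$ as a map $N^*\to M^*$, and checking that the substitution $L=N$ into $\HHHH^{-i-1}(G,L)\cong(\HHHH^i(G,L^*))^*$ produces exactly the stated square. One should also remark that the argument as written uses the diagram for $L=M$ and $L=N$ simultaneously, together with naturality of Tate duality in $L$, which is exactly what makes the square commute. Since the paper's intended statement of the corollary is that condition (2) ``is equivalent (by the previous diagram) to the statement that $\HHHH^i(G,\varphi)$ is zero for all $i\le 0$,'' I would phrase the write-up to make that equivalence explicit, perhaps noting separately that $\HHHH^0(G,\varphi)$ is covered by both (1) and the $i=\nobreak$ analysis, so there is no gap at degree zero.
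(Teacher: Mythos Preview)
Your proposal is correct and follows exactly the paper's approach: the paper states the corollary as an immediate consequence of the Tate duality diagram in the proof of Proposition~\ref{prop:dualghost}, noting that condition~(2) is equivalent, via that diagram, to the vanishing of $\HHHH^i(G,\varphi)$ in nonpositive degrees. Your write-up simply unpacks this one-line remark in detail, and your bookkeeping of indices (condition~(2) for $i\ge 0$ yields vanishing of $\HHHH^{-i-1}(G,\varphi)$ for all $i\ge 0$, hence all degrees $\le -1$, with degree~$0$ already covered by~(1)) is accurate.
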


Recall that the Evens-Venkov Theorem states that for any
finitely generated $kG$-module $M$, the ordinary cohomology 
$\HHH^*(G, M)$ is finitely generated as a module over $\HHH^*(G, k)$.
Moreover, the ring $\HHH^*(G, k)$ is a finitely generated 
$k$-algebra. This can be used to show that ghosts are detected
on non-negative cohomology. 

\begin{thm} \label{thm:ghostpositivecoho} 
  Let $M$ and $N$ be finitely generated $kG$-modules. Let positive
  integers  $m$ and $n$ be the least upper bounds for the 
degrees of the generators of  $H^*(G, M)$ and $H^*(G, N^*)$ respectively.
If $\varphi \colon M \rar N$ is any map such that
\[
\HHHH^i(G, \varphi) = 0 \ \  \text{for } \  0 \le i \le  m \ \  
\text{and}  \ \   \HHHH^i(G, \varphi^*) = 0 \ \  \text{for} \ \
0 \le i \le  n,
\]
then $\varphi$ is a ghost. 
\end{thm}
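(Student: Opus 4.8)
The plan is to reduce the claim to Corollary~\ref{cor:ghostbycohomology}, which says that $\varphi$ is a ghost precisely when $\HHHH^i(G,\varphi)=0$ for all $i\ge 0$ \emph{and} $\HHHH^i(G,\varphi^*)=0$ for all $i\ge 0$. Thus the only thing to prove is that vanishing of $\HHHH^i(G,\varphi)$ in the finite range $0\le i\le m$ forces vanishing for all $i\ge 0$, and symmetrically for $\varphi^*$ in the range $0\le i\le n$. By the symmetry of the two conditions it suffices to handle the first one.

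The key input is the Evens--Venkov theorem: $\HHH^*(G,M)$ is finitely generated as a module over the finitely generated graded $k$-algebra $\HHH^*(G,k)$. First I would note that in positive cohomological degrees Tate cohomology agrees with ordinary cohomology, so that for $i>0$ we may work with $\HHH^i(G,M)$ and $\HHH^i(G,N)$, and $\HHHH^i(G,\varphi)$ is just $\HHH^i(G,\varphi)$; the degree $0$ case is covered directly by the hypothesis. The map $\varphi$ induces a homomorphism $\HHH^*(G,\varphi)\colon \HHH^*(G,M)\to\HHH^*(G,N)$ of graded modules over $\HHH^*(G,k)$, because cup products with classes in $\HHH^*(G,k)$ are natural in the module argument. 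Now pick a finite set of homogeneous $\HHH^*(G,k)$-module generators $\xi_1,\dots,\xi_r$ of $\HHH^*(G,M)$; by hypothesis their degrees are at most $m$, so $\HHH^*(G,\varphi)(\xi_j)=0$ for every $j$. Any homogeneous class in $\HHH^{i}(G,M)$ with $i\ge 0$ can be written as $\sum_j a_j\,\xi_j$ with $a_j\in\HHH^*(G,k)$ homogeneous of the appropriate degree, and then
\[
\HHH^{i}(G,\varphi)\Bigl(\sum_j a_j\,\xi_j\Bigr)=\sum_j a_j\cdot \HHH^{i}(G,\varphi)(\xi_j)=0 ,
\]
using naturality of the module structure. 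Hence $\HHHH^i(G,\varphi)=0$ for all $i\ge 0$. Applying the identical argument to $\varphi^*\colon N^*\to M^*$, using that the generators of $\HHH^*(G,N^*)$ have degree at most $n$, gives $\HHHH^i(G,\varphi^*)=0$ for all $i\ge 0$. Corollary~\ref{cor:ghostbycohomology} then yields that $\varphi$ is a ghost.

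The only genuinely delicate point is making sure the degree bookkeeping at the bottom of the range is airtight: one must confirm that $\HHHH^0$ is literally controlled by the hypothesis (it is, since $0\le m$ and $0\le n$), and that no class of \emph{negative} Tate degree needs separate attention — it does not, because the negative-degree statement for $\varphi$ is exactly the non-negative-degree statement for $\varphi^*$ via Tate duality, which is precisely the content of Corollary~\ref{cor:ghostbycohomology}. A minor subtlety is that "least upper bound for the degrees of the generators" should be read as: there exists a finite generating set all of whose degrees are $\le m$; since $\HHH^*(G,M)$ is Noetherian over $\HHH^*(G,k)$ such a finite set exists, and one may always enlarge a generating set or the bound without harm. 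With these points noted, the argument is the short module-theoretic computation above, and I expect no serious obstacle beyond this careful handling of the edge cases.
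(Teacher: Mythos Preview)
Your proof is correct and follows exactly the same strategy as the paper: reduce to Corollary~\ref{cor:ghostbycohomology}, then use the $\HHH^*(G,k)$-module structure on $\HHH^*(G,M)$ (and on $\HHH^*(G,N^*)$) together with Evens--Venkov finite generation to propagate vanishing from the generators to all nonnegative degrees. Your writeup is in fact more detailed than the paper's, which simply notes that vanishing on the degrees where the generators sit forces vanishing in all nonnegative degrees.
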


\begin{proof}
  By Corollary \ref{cor:ghostbycohomology}, to show that $\varphi$
  is a ghost, it is enough to show that 
  $\HHHH^i(G , \varphi)$ and $\HHHH^i(G, \varphi^*)$ are both zero maps
  for all $i \ge 0$. 
  Since $\HHHH^i(G, \varphi)$ is zero for all $i$ with $0 \leq i \le m$, 
it is zero in all of 
  the degree where the generators are located. Thus, it is zero
  in all nonnegative degrees. The same also holds for $H^i(G, \varphi^*)$.
\end{proof}

\section{Eventual ghosts and groups with periodic cohomology}
\label{eventualghosts}

We say that a map $\varphi \colon M \rar N$ between finitely generated
$kG$-modules is an \emph{eventual ghost} if there exists
an integer $n$ such that $\HHHH^i(G, \varphi) = 0$  for all $i \ge n$.
Clearly, every ghost is also an eventual ghost. 
In this section we show that the converse holds if and only
if $G$ has periodic cohomology.   We begin with a lemma which
gives a sufficient condition for eventual ghosts.

\begin{lemma} \label{mainlemma}
  Let $M$ be a finitely generated $kG$-module. Assume that $\HHH^*(G,k)$
  is generated in degrees at most $d$ and that $\HHH^*(G,M)$ is generated
  as a right $\HHH^*(G,k)$-module in degrees at most $m$. Let $\varphi:M \to N$
  be a homomorphism, and suppose that for some $t>m$,
  $\HHH^i(G, \varphi) = 0$ for all $i$ such that $t+1 \leq i \leq t+d$.
  Then $\HHH^i(G, \varphi)$ is zero for all $i \ge t + 1$.
  
\end{lemma}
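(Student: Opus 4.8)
The plan is to exploit the module structure of $\HHH^*(G,M)$ over the finitely generated ring $\HHH^*(G,k)$. The key observation is that any class $\eta \in \HHH^i(G,M)$ with $i \ge t+1$ can be written as a sum of products $\zeta \cdot \xi$, where $\zeta \in \HHH^j(G,M)$ with $j \le m$ is one of the finitely many module generators and $\xi \in \HHH^{i-j}(G,k)$. Because $\HHH^*(G,\varphi)$ is a map of $\HHH^*(G,k)$-modules (naturality of cup products makes $\HHH^*(G,\varphi)$ right $\HHH^*(G,k)$-linear), we have $\HHH^i(G,\varphi)(\zeta\cdot\xi) = \HHH^j(G,\varphi)(\zeta)\cdot\xi$. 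So if we knew $\HHH^j(G,\varphi) = 0$ for all the relevant generator degrees $j$, we would be done immediately; but we only assume vanishing in the band $t+1 \le i \le t+d$, which does not contain those low degrees $j \le m$. The idea, then, is to push the vanishing \emph{upward} by induction on the degree $i$, starting from the band.

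First I would set up the induction: the base cases are the degrees $i$ with $t+1 \le i \le t+d$, where $\HHH^i(G,\varphi) = 0$ by hypothesis. For the inductive step, fix $i > t+d$ and assume $\HHH^\ell(G,\varphi) = 0$ for all $\ell$ with $t+1 \le \ell < i$. Take an arbitrary class $\eta \in \HHH^i(G,M)$ and write it, using the generation hypothesis on $\HHH^*(G,M)$, as $\eta = \sum_r \zeta_r \cdot \xi_r$ with $\deg \zeta_r =: j_r \le m$ and $\deg \xi_r = i - j_r \ge i - m$. Now I split off the degree-zero part of each $\xi_r$: write $\xi_r = c_r + \xi_r'$ where $c_r \in k = \HHH^0(G,k)$ and $\xi_r'$ lies in strictly positive degrees. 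Since $\HHH^*(G,k)$ is generated in degrees at most $d$, each $\xi_r'$ is a sum of products of elements each of which has degree between $1$ and $d$; peeling one such factor off, $\zeta_r \cdot \xi_r' = \sum_s (\zeta_r \cdot \alpha_{r,s}) \cdot \beta_{r,s}$ where $\alpha_{r,s} \in \HHH^{\le d}(G,k)$ with $\deg\alpha_{r,s}\ge 1$ and $\beta_{r,s}$ has degree $i - j_r - \deg\alpha_{r,s}$. The class $\zeta_r\cdot\alpha_{r,s}$ lies in degree $j_r + \deg\alpha_{r,s}$, which is at most $m + d$; and since $i > t+d > m+d$, this degree is $< i$, and it is at least $1 > 0$ — so it lies in a degree $\ell$ with... here I need $\ell$ to be covered by the inductive hypothesis, i.e. $\ell \ge t+1$, which is where a little care is needed.

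The main obstacle is exactly bridging the gap between the low generator degrees ($\le m$) and the band ($\ge t+1$): a single factoring step moves a class up by at most $d$ in degree, so a product may land in an intermediate degree $\ell$ with $m < \ell < t+1$ that the induction has not yet reached. I expect the right fix is to iterate the splitting-off procedure — repeatedly peel degree-$\le d$ factors off the $\HHH^*(G,k)$-part of each summand until \emph{every} residual product has its "module" factor sitting in a degree that is either $\ge t+1$ (handled by induction, once set up to start at $t+1$) or equals one of finitely many degrees in the window that we can also reach by choosing $t$ large — in fact, since the hypothesis only constrains $t > m$, one reorganizes the argument so that the induction variable is the degree $i$ and every reduction strictly decreases $i$ while keeping the module factor's degree $\le m+d < i$, so that eventually one lands in the band $[t+1, t+d]$ or below; classes landing strictly below $t+1$ but above $m$ must be handled by a secondary induction (or by absorbing them into the band by enlarging $t$ harmlessly). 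Once the bookkeeping is arranged so that every term of $\HHH^i(G,\varphi)(\eta)$ factors through some $\HHH^\ell(G,\varphi)$ with $\ell$ in a range already known to vanish, linearity of $\HHH^*(G,\varphi)$ over $\HHH^*(G,k)$ finishes the step, and the induction gives $\HHH^i(G,\varphi) = 0$ for all $i \ge t+1$.
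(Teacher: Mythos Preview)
Your overall strategy (induct on the degree, use that $\HHH^*(G,\varphi)$ is $\HHH^*(G,k)$-linear) is exactly right, but the ``gap'' you struggle with in the last paragraph is one you created for yourself, and your proposed fixes (a secondary induction, or ``enlarging $t$'') are either unexplained or not permitted ($t$ is fixed by the hypothesis). The proof is not complete as written.

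The paper avoids the whole difficulty by peeling off a ring generator from the \emph{right} of the module element, rather than first decomposing into module generators and then peeling from the left. Concretely: choose positive-degree algebra generators $\zeta_1,\dots,\zeta_u$ of $\HHH^*(G,k)$, with $d_i = \deg \zeta_i \le d$. For any $n>m$, the hypothesis on module generation forces
\[
\HHH^n(G,M) \;=\; \sum_i \HHH^{\,n-d_i}(G,M)\cdot \zeta_i,
\]
because an element in degree $n>m$ must involve a positive-degree ring element, and every positive-degree ring element lies in $\sum_i \HHH^*(G,k)\,\zeta_i$. Applying $\HHH^*(G,\varphi)$ gives
\[
\HHH^n(G,\varphi) \;=\; \sum_i \HHH^{\,n-d_i}(G,\varphi)\cdot \zeta_i.
\]
Now the upward induction is immediate: the band $t+1\le i\le t+d$ is the base case, and for $n>t+d$ each $n-d_i$ lies in $[n-d,\,n-1]\subseteq[t+1,\,n-1]$, which is already covered. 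No intermediate degrees below $t+1$ ever appear.

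Your left-peeling scheme can in fact be salvaged (the sequence of partial degrees starts at $j_r\le m$, ends at $i>t+d$, and moves in steps of size at most $d$, so it must enter the width-$d$ band $[t+1,t+d]$ at some stage), but you did not isolate this observation, and it is needlessly intricate compared with the one-line reduction above.
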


\begin{proof}
  Let $\{ \zeta_1, \zeta_2, \cdots \zeta_u\}$ be a set of generators in positive degrees for
  $\HHH^*(G,k)$, and let $d_i\leq d$ denote the degree of $\zeta_i$.  Then
  for any $n$ with $n >m$, 
\[
\HHH^n(G, M) = \sum_i \HHH^{n-d_i}(G, M) \zeta_i.
\]
Taking the induced map in cohomology, we have that 
\[
\HHH^n(G, \varphi) = \sum_i \HHH^{n-d_i}(G, \varphi) \zeta_i
\]
By hypothesis $\HHH^{i}(G, \varphi) = 0$
for $t+1 \leq i \leq t+d$. Inductively, assume that
$\HHH^{i}(G, \varphi) = 0$ for $t+1 \leq i < n$ and $n > t+d$.
Then by the last equation,  $\HHH^{n}(G, \varphi) = 0$, since
$n-d_i \geq n-d \ge t+1$. Thus, the proof follows by induction. 
\end{proof}

Let $G$ be a finite group and let $k$ be a field of characteristic $p$.
A group $G$ is said to have periodic cohomology if there exists a class
$\eta$ in $\HHH^d(G, k)$ such that for $i \ge 0$ multiplication by $\eta$
gives an isomorphism
\[
\HHH^i(G, k) \cong
\HHH^{i+d}(G, k).
\]
Groups with periodic cohomology play an important role in
representation theory and topology. It is well known that
$G$ has periodic cohomology
if and only if the Sylow $p$-subgroup of $G$  is a cyclic
group or a generalized quaternion group.  In \cite{CarCheMin}
we proved that for every finitely generated $kG$-module $M$,
the Tate cohomology $\HHHH^*(G, M)$ is finitely generated as
a graded module over $\HHHH^*(G, k)$ if and only if $G$ has
periodic cohomology.

\begin{thm} \label{thm:periodic}
  Let $M$ be a finitely generated $kG$-module.
  If every eventual ghost map from $M$ is a ghost,
then $\HHHH^*(G, M)$ is a finitely generated module over $\HHHH^*(G, k)$.
\end{thm}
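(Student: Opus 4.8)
The plan is to argue by contrapositive: assuming $\HHHH^*(G,M)$ is \emph{not} finitely generated over $\HHHH^*(G,k)$, I would construct an eventual ghost from $M$ that is not a ghost. The natural target is a syzygy of $M$ itself, and the map will be the one coming from picking off a cohomology class that cannot be reached from bounded degrees. Concretely, since $\HHHH^*(G,M)$ is not finitely generated over $\HHHH^*(G,k)$, and since $\HHH^*(G,k)$ is a finitely generated $k$-algebra by Evens--Venkov, the module $\HHHH^*(G,M)$ has generators in arbitrarily large (positive) Tate degrees; otherwise it would be finitely generated. So I would pick a degree $r$ — as large as needed — and a class $\zeta \in \HHHH^r(G,M)$ that is not in the submodule generated by $\HHHH^{<r}(G,M)$ under the positive-degree part of $\HHH^*(G,k)$.

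The second step is to turn $\zeta$ into a map. Using the identification $\HHHH^r(G,M)\cong \uHom_{kG}(\Omega^r(k),M)$, the class $\zeta$ is represented by a map $f_\zeta\colon \Omega^r(k)\to M$; equivalently, shifting, there is a map $\varphi\colon M \to \Omega^{-r}(k)$ (or into an appropriate syzygy of $k$) that is ``dual'' to $\zeta$ in the sense that it detects $\zeta$ and kills everything coming from lower degrees. The idea is that precomposition / the induced map $\HHHH^i(G,\varphi)$ is, degree by degree, given by multiplication pairings with $\zeta$, and by the choice of $\zeta$ (not in the image from lower degrees together with the ring generators) one can arrange $\HHHH^i(G,\varphi)=0$ for all $i$ below some bound while $\HHHH^r(G,\varphi)\neq 0$. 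Here Lemma \ref{mainlemma} is the guide: it says that if a map vanishes on a window of length $d$ (the top generator degree of $\HHH^*(G,k)$) above the generator-degree bound $m$ of $\HHHH^*(G,N)$, then it vanishes in all higher degrees. Turning this around: to get an eventual ghost that is not a ghost, I need a nonzero contribution in some degree that is \emph{isolated} — not forced to propagate — and the failure of finite generation is exactly what provides such isolated, non-propagating classes in unboundedly high degrees.

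The cleanest way to package this is probably: suppose $\HHHH^*(G,M)$ is not finitely generated; then I claim there is, for the specific map $\varphi$ built above, an $n$ with $\HHHH^i(G,\varphi)=0$ for all $i\ge n$ but $\HHHH^r(G,\varphi)\neq 0$ for some $r<n$, so $\varphi$ is an eventual ghost but not a ghost. For the ``eventual'' part one uses Lemma \ref{mainlemma} applied to the target module together with the fact that the map is essentially pairing against a single class $\zeta$ in bounded degree $r$, so above $r$ plus the generator bounds it is automatically zero. For the ``not a ghost'' part one just needs $\zeta$ itself to survive, i.e. $\HHHH^r(G,\varphi)$ evaluated on $\zeta$ (or on the generator of $\HHHH^r(\Omega^r(k))=\HHHH^0(k)$) is nonzero, which holds by construction since $\zeta$ was chosen to be a genuine new generator.

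\textbf{Main obstacle.} The hard part will be the bookkeeping that makes ``$\varphi$ detects $\zeta$ and annihilates all lower-degree cohomology'' precise and simultaneously gives the eventual-ghost property — in other words, showing that a single high-degree generator of $\HHHH^*(G,M)$ yields a map whose induced maps on Tate cohomology are zero in \emph{all but one} degree range. One must be careful that the map $\varphi$ doesn't accidentally act nontrivially in negative degrees or in degrees just below $r$; controlling this likely requires choosing $\zeta$ minimally (a generator in the smallest available large degree) and invoking Tate duality / Proposition \ref{prop:dualghost} to handle the negative-degree side. The interplay between the module generator-degree bound $m$, the ring generator-degree bound $d$, and the chosen degree $r$ (which must be taken larger than $m+d$, and larger than the corresponding bounds for the target) is the delicate quantitative core of the argument.
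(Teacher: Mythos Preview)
Your contrapositive strategy is reasonable, but the construction of $\varphi$ is where the argument breaks down, and the gap is not just bookkeeping.

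First, the passage from the class $\zeta\in\HHHH^r(G,M)\cong\uHom_{kG}(\Omega^r(k),M)$ to ``a map $\varphi\colon M\to\Omega^{-r}(k)$'' is simply wrong: applying $\Omega^{-r}$ to $f_\zeta\colon\Omega^r(k)\to M$ yields a map $k\to\Omega^{-r}(M)$, not a map out of $M$. A cohomology class of $M$ is a map \emph{into} $M$; there is no natural map from $M$ to a syzygy of $k$ attached to it, and dualizing lands you in $M^*$, not $M$. So the object $\varphi$ you want to analyze has not actually been defined.

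Second, even under the most charitable reading---take $\varphi\colon M\to L$ to be the third map in the triangle on $f_\zeta$---the eventual-ghost property fails. In that triangle, $\HHHH^i(G,\varphi)=0$ exactly when $\HHHH^i(G,M)$ is contained in $\HHHH^{i-r}(G,k)\cdot\zeta$. For $\varphi$ to be an eventual ghost you would need the single class $\zeta$ to generate $\HHHH^i(G,M)$ over $\HHHH^*(G,k)$ for all large $i$, which is precisely a finite-generation statement you cannot assume. Your appeal to Lemma~\ref{mainlemma} requires as input that $\HHHH^i(G,\varphi)$ already vanish on an entire window of length $d$; nothing about choosing $\zeta$ to be a ``new generator'' produces such a window.

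The paper does not argue by contrapositive. It works directly: pick a $k$-basis $\{\theta_j\}$ of $\bigoplus_{i=m+1}^{m+d}\HHH^i(G,M)$, assemble these into a single map $\eta\colon\bigoplus_j\Omega^{e_j}(k)\to M$, and let $\varphi\colon M\to L$ be the cofiber. By construction $\HHHH^i(G,\varphi)=0$ on the window $m+1\le i\le m+d$, so Lemma~\ref{mainlemma} makes $\varphi$ an eventual ghost; the hypothesis then says $\varphi$ is a ghost. Now any $\gamma\in\HHHH^t(G,M)$ satisfies $\varphi\gamma=0$, hence factors through $\eta$, which exactly says $\gamma$ lies in the $\HHHH^*(G,k)$-submodule generated by the finitely many $\theta_j$. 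The essential idea you are missing is that one must kill an entire $d$-wide band of cohomology, not a single class, and that this is achieved by taking the cofiber of a map hitting a \emph{basis} of that band.
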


\begin{proof}
Suppose that $\HHH^*(G,k)$ is generated in degrees at most $d$ and that
$\HHH^*(G, M)$ as a module over 
$\HHH^*(G, k)$ is generated in degrees at most $m$. 
Choose a $k$-basis $\{ \theta_j\}$ 
for the finite-dimensional space $V = \sum_{i = m+1}^{m+d}H^i(G, M)$.
Each is represented by a cocycle $\theta_j : \Omega^{e_j}(k) \to M$, where
$e_j$ is the degree. We assemble them to form a map
$\eta: \sum \Omega^{e_j}(k) \to M$,
which is completed  to a  triangle in $\stmod(kG)$
\[
\xymatrix{
\oplus_j\,\Omega^{e_j}(k) \ar[r]^{\qquad \eta} & M  
\ar[r]^\varphi & L.
}
\]
Because the $\theta_j$'s generate $V$, the map $\varphi$ has the
property that $\HHH^i(G, \varphi) = 0$ for $m+1 \leq i \leq m+d$.
Thus, by Lemma \ref{mainlemma}, $\varphi$ 
is an eventual ghost, and hence a ghost.

Let $\gamma$ be an arbitrary homogeneous element 
in $\HHHH^*(G, M)$ in degree $t$. In the diagram
\[
\xymatrix{
\oplus_j\,\Omega^{e_j} (k)  \ar[rr]^{\eta} 
& &M \ar[rr]^{\varphi} & & L  \\
&& \Omega^t(k) \ar[u]^{\gamma} \ar[urr]_{0}\ar@{..>}[ull] &&
}
\]
$\varphi$ is a ghost and $\varphi \gamma$ is zero. 
Hence $\gamma$ factors
through $\eta$. This shows that 
that the classes $\{ \theta_j \}$ 
generate $\HHHH^*(G, M)$ as a module over $\HHHH^*(G, k)$.
\end{proof}

The next example shows that the converse of this theorem is not true.

\begin{example}   Let $G = C_2 \times C_2$. Consider 
  the generator $\eta$ of $\HHHH^{-1}(G, k)$ which is the Tate dual
  of the identity in $\HHHH^0(G,k)$. This can be represented as 
$\eta \colon \Omega^{-1}(k) \rar k.$
The domain of $\eta$ is $\Omega^{-1}(k)$, whose Tate cohomology is 
just a suspension of the Tate cohomology ring $\HHHH^{^{*}}(G, k)$. 
In particular it is finitely generated over $\HHHH^*(G, k)$.
Thus, $\eta$ is  an eventual ghost but not a ghost.  In fact, it follows 
from the multiplicative structure of the Tate cohomology ring of the
Klein group that $\HHHH^i(G, \eta)$ is nonzero only in 
degree $0$.
\end{example}

\begin{thm}
  Let $G$ be a finite group. Then every eventual ghost 
map in $\stmod(kG)$ is a ghost map if and only if $G$ has periodic cohomology. 
\end{thm}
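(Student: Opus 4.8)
The plan is to prove the two implications separately, using the earlier machinery. For the forward direction, suppose every eventual ghost in $\stmod(kG)$ is a ghost. Then in particular, for every finitely generated $kG$-module $M$, every eventual ghost \emph{out of} $M$ is a ghost, so Theorem \ref{thm:periodic} applies and tells us that $\HHHH^*(G,M)$ is finitely generated over $\HHHH^*(G,k)$. Since this holds for all $M$, the result of \cite{CarCheMin} quoted just before Theorem \ref{thm:periodic} forces $G$ to have periodic cohomology; equivalently, the Sylow $p$-subgroup of $G$ is cyclic or generalized quaternion.

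For the converse, assume $G$ has periodic cohomology, say with periodicity generator $\eta \in \HHH^d(G,k)$ acting invertibly in high degrees on $\HHHH^*(G,k)$. Let $\varphi: M \to N$ be an eventual ghost, so $\HHHH^i(G,\varphi) = 0$ for all $i \ge n$, some $n$. I want to conclude $\HHHH^i(G,\varphi) = 0$ for \emph{all} $i$, and then Corollary \ref{cor:ghostbycohomology} (or the bounded-range criterion) will finish it. The idea is to use periodicity to push the vanishing downward: multiplication by $\eta$ gives, for each module $L$, an isomorphism $\HHHH^i(G,L) \cong \HHHH^{i+d}(G,L)$ once $i$ is sufficiently large (this holds for $\HHHH^*(G,k)$ by definition of periodicity, and passes to $\HHHH^*(G,M)$ and $\HHHH^*(G,N)$ because these are finitely generated graded modules over a ring in which $\eta$ eventually acts invertibly — so $\eta$ acts invertibly on them above some degree as well). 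By naturality of the $\eta$-action, the square relating $\HHHH^i(G,\varphi)$ and $\HHHH^{i+d}(G,\varphi)$ via the two $\eta$-multiplication isomorphisms commutes, so vanishing of $\HHHH^{i+d}(G,\varphi)$ implies vanishing of $\HHHH^i(G,\varphi)$, as long as both $i$ and $i+d$ lie in the range where $\eta$ is invertible. Iterating this walks the vanishing down from degree $n$ to every sufficiently large degree below it — but that only re-proves what we assumed. The real content is at \emph{negative} degrees, and here is where I would invoke duality: by Proposition \ref{prop:dualghost}'s proof and Corollary \ref{cor:ghostbycohomology}, it suffices to show $\HHHH^i(G,\varphi) = 0$ for all $i \ge 0$ and $\HHHH^i(G,\varphi^*) = 0$ for all $i \ge 0$. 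Now $\varphi^*$ is also an eventual ghost: Tate duality $\HHHH^{-i-1}(G,L) \cong (\HHHH^i(G,L^*))^*$ turns the high-degree vanishing of $\HHHH^*(G,\varphi)$ into high-degree vanishing of $\HHHH^*(G,\varphi^*)$. So both $\varphi$ and $\varphi^*$ are eventual ghosts, and it is enough to prove: \emph{an eventual ghost $\psi: A \to B$ with $\HHHH^i(G,\psi) = 0$ for $i \ge n$ satisfies $\HHHH^i(G,\psi) = 0$ for all $i \ge 0$.} This follows from the $\eta$-multiplication argument above applied in the range $0 \le i$: since $\HHHH^*(G,A)$ and $\HHHH^*(G,B)$ are finitely generated over $\HHHH^*(G,k)$ with $\HHHH^*(G,k)$ generated in bounded degrees and $\eta$ acting invertibly above some bound, every class in $\HHHH^i(G,A)$ for $i \ge 0$ is, after multiplying by a suitable power $\eta^r$, carried isomorphically into a degree $\ge n$, and the commuting square shows $\HHHH^i(G,\psi)$ must already have been zero. (One should be slightly careful at small nonnegative degrees where $\eta$ might not yet act invertibly; but one can first multiply into the stable range, apply the isomorphism there, and come back — the key point is that $\eta^r$ acts injectively on a class in degree $i$ for $i \ge 0$ once $r$ is large, because $\eta$ is a non-zero-divisor in high degrees and the module is finitely generated. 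If $\eta$ is genuinely a non-zero-divisor on $\HHHH^*(G,k)$ in \emph{all} nonnegative degrees, which holds for periodic groups since $\HHHH^{\ge 0}$ is a domain in the cyclic case and more care is needed for quaternion, this is immediate; otherwise one passes to a large enough power.)

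Applying this to both $\psi = \varphi$ and $\psi = \varphi^*$ gives conditions (1) and (2) of Corollary \ref{cor:ghostbycohomology}, hence $\varphi$ is a ghost, completing the converse.

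\textbf{Main obstacle.} The delicate point is the precise behavior of multiplication by the periodicity class $\eta$ on $\HHHH^*(G,M)$ in \emph{low} nonnegative degrees and in \emph{negative} degrees: periodicity is a statement about $\HHHH^*(G,k)$ in degrees $i \ge 0$ (or even a two-sided periodicity on all of Tate cohomology), and one must check that it propagates to an arbitrary finitely generated module $M$ with good enough control to run the downward induction across \emph{all} degrees, not just high ones. The clean way to handle this — and the route I would ultimately take to avoid case analysis between the cyclic and quaternion cases — is to exploit that for periodic $G$, some power $\eta^s \in \HHHH^{sd}(G,k)$ induces an isomorphism $\HHHH^i(G,L) \cong \HHHH^{i+sd}(G,L)$ for \emph{every} module $L$ and \emph{every} integer $i$ (two-sided Tate periodicity), so that $\HHHH^i(G,\varphi)$ is determined up to these isomorphisms by its values in a single period $n \le i < n+sd$, all of which vanish by the eventual-ghost hypothesis. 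This reduces the entire theorem to establishing two-sided Tate periodicity for periodic groups, which is standard.
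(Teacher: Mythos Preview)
Your forward direction (every eventual ghost a ghost $\Rightarrow$ periodic cohomology) is exactly the paper's argument: apply Theorem~\ref{thm:periodic} to every $M$ and invoke \cite[Theorem 4.1]{CarCheMin}.

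For the converse, however, your first line of attack contains a genuine error. You assert that Tate duality turns the high-degree vanishing of $\HHHH^*(G,\varphi)$ into high-degree vanishing of $\HHHH^*(G,\varphi^*)$, i.e.\ that $\varphi^*$ is again an eventual ghost. It does not: from $\HHHH^{-i-1}(G,L) \cong (\HHHH^i(G,L^*))^*$, vanishing of $\HHHH^j(G,\varphi)$ for $j \ge n$ yields vanishing of $\HHHH^i(G,\varphi^*)$ only for $i \le -n-1$, i.e.\ in \emph{very negative} degrees. So $\varphi^*$ need not be an eventual ghost, and the reduction to Corollary~\ref{cor:ghostbycohomology} via the pair $(\varphi,\varphi^*)$ collapses. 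The subsequent worries about whether $\eta$ is a non-zero-divisor in low degrees, and the case split between cyclic and quaternion Sylow subgroups, are symptoms of trying to force this broken reduction through.

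The good news is that your final paragraph already names the correct argument, and it is precisely what the paper does. When $G$ has periodic cohomology, the periodicity class represents a stable isomorphism $\Omega^d(k) \cong k$, and hence multiplication by it is an isomorphism $\HHHH^i(G,L) \to \HHHH^{i+d}(G,L)$ for \emph{every} finitely generated $L$ and \emph{every} integer $i$ --- no restriction to large or nonnegative degrees, and no dependence on $L$. Given an eventual ghost $\varphi$, choose $d$ consecutive degrees above the vanishing threshold; periodicity then forces $\HHHH^i(G,\varphi)=0$ for all $i$. That is the whole proof of the converse: one sentence, no duality, no Corollary~\ref{cor:ghostbycohomology}, no case analysis. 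Discard the detour and keep only your last paragraph.
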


\begin{proof}
The ``only if'' part is clear because if $G$ has periodic cohomology, 
say of period $d$, then we can pick $d$ consecutive
integers sufficiently large where the induced map in Tate 
cohomology is zero. But then periodicity of Tate
cohomology implies that they induce the zero map in Tate 
cohomology in all degrees.

If every eventual map is a ghost map, then the above theorem 
tells us that every finitely generated $kG$-module has finitely  
generated Tate cohomology.
By \cite[Theorem 4.1]{CarCheMin}, $G$ has periodic cohomology.
\end{proof}

Note that in the case when $G$ does not have periodic cohomology, 
the above theorem helps us construct an eventual-ghost map between 
$kG$-modules that is not a ghost.

\bibliographystyle{plain}

\end{document}